  \newtheorem{thm}{Theorem}[section]
 \newtheorem{cor}[thm]{Corollary}
 \newtheorem{prop}[thm]{Proposition}
 \newtheorem{lemma}[thm]{Lemma}
 \newtheorem{rem}[thm]{Remark}
 \newcommand{\lx}{\frac{\log p}{x}}
\title{On the asymptotic behaviour of the quantiles in the gamma distribution}
\author{Henrik Laurberg Pedersen}
\begin{document}

\maketitle

\begin{abstract}
 The asymptotic behaviour of the quantiles in the gamma distribution are investigated as the shape parameter tends to zero. Some remarks about the behaviour at infinity are given.
\end{abstract}
{\em Keywords:} Gamma distribution, quantile, incomplete gamma function, asymptotic expansion.

\noindent {\em 2020 MSC:} Primary 41A60; Secondary 33B20.
\section{Introduction}
The $p$-quantile $m_p(x)$ in the gamma distribution of shape parameter $x$ and scale parameter $1$ is, for $p\in (0,1)$, given implicitly as
\begin{equation}
\label{eq:m}
\int_0^{m_p(x)}e^{-t}t^{x-1}\, dt=p\Gamma(x).
\end{equation}
This equation can be written as $P(x,m_p(x))=p$, where $P$ is a normalized incomplete gamma function. See \cite[8.2.4]{dlmf}.
The asymptotic behaviour at infinity of the quantiles in the gamma distribution was investigated in the papers \cite{T} and \cite{daalhuis-nemes} (and for the beta distribution in \cite{T2}). We complement this by an investigation at the origin and revisit the expansion at infinity via an elementary method, inspired by \cite{CP1}. Notice that the main focus in \cite{CP1} (and \cite{CP2}) was on monotonicity properties of the median $m=m_{1/2}$, and the so-called Chen-Rubin conjecture.

The behaviour of $m$ and $m'$ at the origin was in \cite{CP1} shown to be
$$
m(x)\sim e^{-\gamma}e^{-\frac{\log 2}{x}}, \quad m'(x)\sim \frac{\log 2}{x^2}e^{-\gamma}e^{-\frac{\log 2}{x}}\quad \text{as}\ x\to 0.
$$
Introducing the function
$u(x)=e^{\nicefrac{\log 2}{x}} m(x)$
it thus follows that $u(x)\to e^{-\gamma}$ for $x\to 0$. A plot of the function $u$ leaves the impression that also its derivative has a finite and positive limit at zero. See Figure \ref{fig:u}. Let us mention that the asymptotic behaviour of $m$ and $m'$ at the origin as displayed above is not sufficient for determining the behaviour of $u'$. In fact, these relations only yield that
$$
x^2u'(x)=x^2e^{\frac{\log 2}{x}}m'(x)-(\log 2) e^{\frac{\log 2}{x}}m(x)
$$
tends to zero as $x$ tends to zero. 

We define, for $p\in (0,1)$ and $x>0$,
\begin{equation}
\label{eq:u_p}
u_p(x)=e^{-\frac{\log p}{x}}m_p(x).
\end{equation}
Experimenting with plots of these functions (for different $p$) indicate that their behaviour at the origin do not depend on $p$. This motivates the results of the paper in which the asymptotic behaviour of $u_p$ at the origin is studied. 

In Proposition \ref{prop:limit-u-n} it is shown that $u_p\in C^{\infty}([0,\infty))$ and that any derivative of $u_p$ at the origin can be computed explicitly, and does not depend on $p$. (It follows from the implicit function theorem that $m_p$ and hence also $u_p$ are $C^{\infty}$-functions on $(0,\infty)$.) In Proposition \ref{prop:m^n} and Corallary \ref{cor:m^n} the asymptotic behaviour of $u_p$ is used to obtain the asymptotic behaviour of $m_p$ and its derivatives at the origin. In particular it is proved that for any $n\geq 0$,
$$
m_p^{(n)}(x)\sim \frac{(-\log p)^n}{x^{2n}}e^{-\gamma}e^{\frac{\log p}{x}}\quad \text{as}\ x\to 0,
$$
generalizing the two relations mentioned above. See \cite{DA} for the asymptotic behaviour at zero of the quantiles in the beta distribution as a function of one of the parameters.

In Appendix \ref{sec:m-at-infinity} some remarks on the asymptotic behaviour of $m_p$ at infinity are given. Another procedure (compared to \cite{T} and \cite{daalhuis-nemes}) is described and Maple code is given. We stress that our result, Theorem \ref{thm:mp-asymp}, is also given in \cite{daalhuis-nemes}.

\begin{figure}
 \begin{center}
 \includegraphics[width=0.5\textwidth]{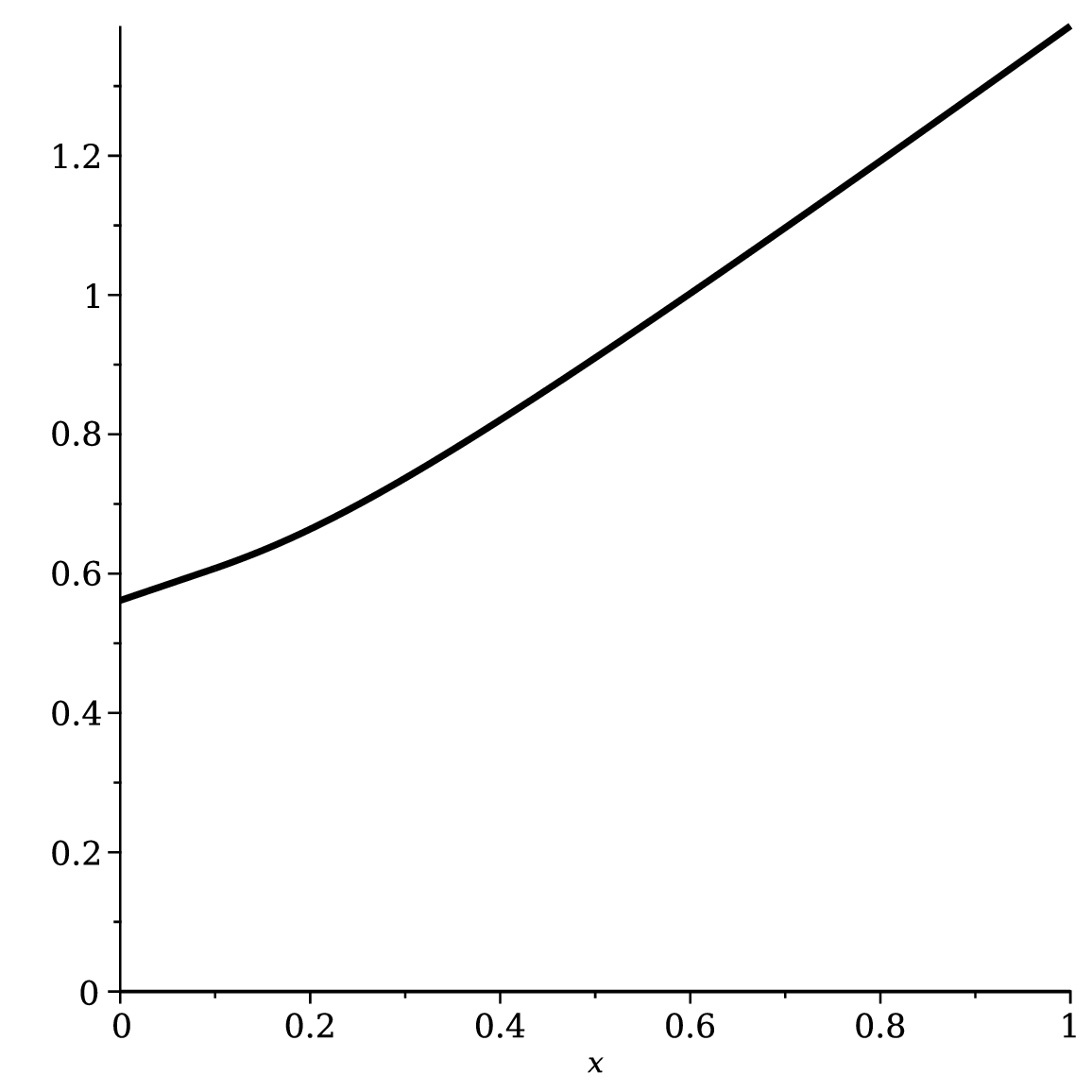}
 \caption{The graph of $u$}
 \label{fig:u}
\end{center}
\end{figure}

Let us end the introduction with a basic result about the quantiles.
\begin{lemma}
\label{lemma:1}
 The function $m_p$ is strictly increasing on $(0,\infty)$. Furthermore, $\lim_{x\to 0}m_p(x)=0$ and $\lim_{x\to \infty}m_p(x)=\infty$.
\end{lemma}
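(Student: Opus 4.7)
The plan is to separate the lemma into three claims --- strict monotonicity, the limit at $0$, and the limit at $\infty$ --- and prove them in that order, because once monotonicity is known both limits reduce to understanding $P(x,L)=\gamma(x,L)/\Gamma(x)$ as $x$ varies with $L>0$ fixed.

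For monotonicity, the defining equation $P(x,m_p(x))=p$ differentiated implicitly yields $m_p'(x)=-P_x(x,m_p(x))/P_y(x,m_p(x))$, and since $P_y(x,y)=e^{-y}y^{x-1}/\Gamma(x)>0$ the task reduces to showing $P_x(x,y)<0$ for all $x,y>0$. I would do this via the double-integral identity
$$\Gamma(x)^2\, P_x(x,y) \;=\; \Gamma(x)\gamma_x(x,y) - \gamma(x,y)\Gamma'(x) \;=\; \int_0^\infty\!\!\int_0^y e^{-(s+t)}(st)^{x-1}(\log t - \log s)\,dt\,ds,$$
split at $s=y$. On $[0,y]\times[0,y]$ the weight $e^{-(s+t)}(st)^{x-1}$ is symmetric in $(s,t)$ while $\log t-\log s$ is antisymmetric, so this piece contributes zero; on $(y,\infty)\times[0,y]$ one has $t<y<s$, hence $\log t-\log s<0$, giving a strictly negative contribution. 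Thus $P_x<0$ and therefore $m_p'>0$ throughout $(0,\infty)$.

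With monotonicity in hand, $L:=\lim_{x\to 0^+}m_p(x)\in[0,\infty)$ and $M:=\lim_{x\to\infty}m_p(x)\in(0,\infty]$ both exist. If $L>0$, then $m_p(x)\ge L$ for every $x$, so $p=P(x,m_p(x))\ge P(x,L)$; writing $P(x,L)=1-\Gamma(x,L)/\Gamma(x)$ and noting that $\Gamma(x,L)=\int_L^\infty e^{-t}t^{x-1}\,dt$ converges to the finite quantity $E_1(L)$ as $x\to 0^+$ while $\Gamma(x)\to\infty$, one obtains $P(x,L)\to 1$ and the contradiction $p\ge 1$. Similarly, if $M<\infty$, then $p\le P(x,M)$ for all $x$; the crude estimate $\gamma(x,M)\le M^x/x$ combined with any standard lower bound for $\Gamma(x)$ (Stirling, say) forces $P(x,M)\to 0$ as $x\to\infty$, contradicting $p>0$.

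The one genuinely substantive step is establishing the sign of $P_x$; the antisymmetry/symmetry trick inside the double integral is the main obstacle, while the remaining limit arguments are routine consequences of the size of $\Gamma(x)$ near $0$ and at $\infty$.
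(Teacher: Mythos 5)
Your proof is correct, but it takes a genuinely different route from the paper's. The paper disposes of the lemma in one line by observing that $\{e^{-t}t^{x-1}\,dt/\Gamma(x)\}_{x>0}$ is a convolution semigroup of probability measures on $[0,\infty)$ and citing the corresponding statement for the median from Berg--Pedersen (Proposition 2.1 of \cite{CP1}): adding an independent nonnegative (a.s.\ positive) gamma variable strictly decreases the distribution function pointwise, so the quantile increases, and the limits at $0$ and $\infty$ follow from weak convergence to $\delta_0$ and escape of mass to infinity. You instead prove the underlying monotonicity of $x\mapsto P(x,y)$ analytically, via the sign of $\Gamma(x)\gamma_x(x,y)-\gamma(x,y)\Gamma'(x)$ and the symmetrization of $\log t-\log s$ over $[0,y]^2$ (a Chebyshev-correlation-type argument), and then extract the two limits from elementary size estimates on $\Gamma(x,L)$, $\gamma(x,M)$ and $\Gamma(x)$. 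All steps check out: the double-integral identity is the correct expression for $\Gamma(x)^2P_x$, the square block vanishes by antisymmetry and the strip $s>y>t$ contributes strictly negatively, and the contradiction arguments for $L>0$ and $M<\infty$ are sound (with $\Gamma(x)\sim 1/x$ at $0$ and $M^x/\Gamma(x+1)\to 0$ at infinity). What your approach buys is self-containedness and the slightly stronger pointwise statement $P_x(x,y)<0$; what the paper's approach buys is brevity and the structural insight that the lemma holds for \emph{any} convolution semigroup of probabilities on $[0,\infty)$, not just the gamma family. The only minor points worth making explicit in a write-up are the justification for differentiating $\gamma(x,y)$ under the integral sign (integrability of $t^{x-1}|\log t|$ near $0$, locally uniformly in $x$) and the invocation of the implicit function theorem, using $P_y>0$, to know that $m_p$ is differentiable in the first place.
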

\begin{proof} The family $\{e^{-t}t^{x-1}dt/\Gamma(x)\}_{x>0}$ is a convolution semigroup of probabilities on $[0,\infty)$ and the assertions in the lemma hold for any convolution semigroup of probabilities on $[0,\infty)$. This is proved in the same way as \cite[Proposition 2.1]{CP1}, replacing $\nicefrac{1}{2}$ by $p$.
\end{proof}

\section{The behaviour of $u_p$ at the origin}
The limit of $u_p$ (defined in \eqref{eq:u_p}) at zero is given in Proposition \ref{prop:up-at-zero} and depends on the next lemma.
\begin{lemma}
\label{lemma:m_p^x}
 We have $\lim_{x\to 0}m_p(x)^x=p$.
 \end{lemma}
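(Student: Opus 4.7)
The plan is to isolate $m_p(x)^x$ directly from the defining integral equation~\eqref{eq:m} by splitting off the dominant singular part of $t^{x-1}$ at zero.

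First I would write
$$
\int_0^{m_p(x)}e^{-t}t^{x-1}\,dt=\int_0^{m_p(x)}t^{x-1}\,dt+\int_0^{m_p(x)}(e^{-t}-1)t^{x-1}\,dt=\frac{m_p(x)^x}{x}+R(x),
$$
so that \eqref{eq:m} becomes $m_p(x)^x=xp\,\Gamma(x)-xR(x)$. Multiplying the remainder by $x$ is the point: $x\Gamma(x)=\Gamma(x+1)\to 1$ as $x\to 0$, so the first term on the right converges to $p$, and the task reduces to showing $xR(x)\to 0$.

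For the remainder I would use the elementary inequality $|e^{-t}-1|\le t$ for $t\ge 0$, giving
$$
|R(x)|\le \int_0^{m_p(x)}t^x\,dt=\frac{m_p(x)^{x+1}}{x+1}.
$$
By Lemma~\ref{lemma:1}, $m_p(x)\to 0$ as $x\to 0$, so for $x$ small enough $m_p(x)<1$, which forces $m_p(x)^x<1$, hence $m_p(x)^{x+1}=m_p(x)\cdot m_p(x)^x\le m_p(x)\to 0$. Therefore $xR(x)\to 0$, and combining with $xp\,\Gamma(x)\to p$ we conclude $m_p(x)^x\to p$.

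I do not foresee a real obstacle: the only delicate point is ensuring the error term $xR(x)$ vanishes without circularly assuming $m_p(x)^x$ is bounded, which is handled cleanly by invoking Lemma~\ref{lemma:1} to get $m_p(x)<1$ eventually. The entire argument is a two-line splitting plus the standard fact $x\Gamma(x)\to 1$.
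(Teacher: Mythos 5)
Your proof is correct, and it follows the same basic strategy as the paper: extract the main term $p\Gamma(x+1)=xp\Gamma(x)$ from the defining relation \eqref{eq:m} and show that the leftover integral over $[0,m_p(x)]$ vanishes because $m_p(x)\to 0$ (Lemma \ref{lemma:1}). The only difference is mechanical: the paper integrates by parts, arriving at $m_p(x)^x e^{-m_p(x)}=p\Gamma(x+1)-\int_0^{m_p(x)}t^xe^{-t}\,dt$, whose remainder is trivially bounded by $m_p(x)$, whereas you split $e^{-t}=1+(e^{-t}-1)$, which costs you the small extra step of bounding $m_p(x)^{x+1}$ via $m_p(x)<1$ eventually --- a step you handle correctly and non-circularly. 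Both routes are equally valid; the paper's identity has the side benefit of being reusable (it is differentiated again in the proof of Proposition \ref{prop:up-at-zero}).
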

\begin{proof}
 Integration by parts yields
 \begin{align}
 m_p(x)^xe^{-m_p(x)}&=x\int_0^{m_p(x)}t^{x-1}e^{-t}\, dt-\int_0^{m_p(x)}t^xe^{-t}\, dt\nonumber \\
 &=p\Gamma(x+1)-\int_0^{m_p(x)}t^{x}e^{-t}\, dt, \label{eq:m_p^x}
 \end{align}
and since $m_p(x)\to 0$ for $x\to 0$ the proof is complete.
\end{proof}

\begin{prop}
\label{prop:up-at-zero}
 We have $\lim_{x\to 0}u_p(x)=e^{-\gamma}$.
\end{prop}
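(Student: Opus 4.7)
The plan is to take logarithms and show that $\log u_p(x) = \log m_p(x) - \frac{\log p}{x}$ tends to $-\gamma$ as $x\to 0$. Lemma \ref{lemma:m_p^x} only gives the leading behaviour $x\log m_p(x)\to \log p$, so the key is to extract the next-order correction. For this I would return to the identity \eqref{eq:m_p^x} from the proof of Lemma \ref{lemma:m_p^x}, rearranged as
$$
m_p(x)^x = e^{m_p(x)}\left[p\Gamma(x+1) - \int_0^{m_p(x)} t^x e^{-t}\, dt\right].
$$
Taking logarithms, dividing by $x$, and subtracting $(\log p)/x$ gives
$$
\log u_p(x) = \frac{m_p(x)}{x} + \frac{1}{x}\log\left[\Gamma(x+1) - \frac{1}{p}\int_0^{m_p(x)} t^x e^{-t}\, dt\right],
$$
and the task reduces to analyzing this right-hand side.

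Next I would expand the bracket. Using $\Gamma(x+1) = 1 - \gamma x + O(x^2)$ and the crude bound $\int_0^{m_p(x)} t^x e^{-t}\, dt \leq \int_0^{m_p(x)} t^x\, dt = m_p(x)^{x+1}/(x+1)$, the bracket becomes $1 - \gamma x + O(x^2) + O(m_p(x)^{x+1})$. If both $m_p(x)/x$ and $m_p(x)^{x+1}/x$ are $o(1)$, then the logarithm equals $-\gamma x + o(x)$, division by $x$ gives $-\gamma + o(1)$, and together with the first term we obtain $\log u_p(x) \to -\gamma$, proving the proposition.

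The main (mild) obstacle is justifying that $m_p(x)$ decays faster than any polynomial as $x \to 0$, which is what makes $m_p(x)/x$ and $m_p(x)^{x+1}/x$ negligible. This is an immediate consequence of Lemma \ref{lemma:m_p^x}: since $x\log m_p(x) \to \log p < 0$, for $x$ small enough one has $\log m_p(x) \leq (\log p)/(2x)$, so $m_p(x) \leq e^{(\log p)/(2x)}$, which decays faster than any power of $x$. Since $m_p(x)^{x+1} = m_p(x)\cdot m_p(x)^x$ and $m_p(x)^x\to p$, the same super-polynomial decay is inherited by $m_p(x)^{x+1}$, and the argument closes.
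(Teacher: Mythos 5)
Your proof is correct, and it takes a genuinely different route from the paper's, even though both hinge on the same identity \eqref{eq:m_p^x} and on Lemma \ref{lemma:m_p^x}. The paper \emph{differentiates} \eqref{eq:m_p^x} in $x$ to obtain $m_p(x)^x\ell_p'(x)e^{-m_p(x)}=p\Gamma'(x+1)-\int_0^{m_p(x)}t^xe^{-t}\log t\,dt$ with $\ell_p(x)=x\log m_p(x)$, deduces $\ell_p'(x)\to\Gamma'(1)=-\gamma$, and then applies l'Hospital's rule to $\log u_p(x)=(\ell_p(x)-\log p)/x$. You instead work with the undifferentiated identity, divide out the factor $p$ inside the logarithm, and extract the limit from the first-order Taylor expansion $\Gamma(x+1)=1-\gamma x+O(x^2)$ together with the observation that $m_p(x)\le e^{(\log p)/(2x)}$ decays super-polynomially (an immediate consequence of $x\log m_p(x)\to\log p<0$), which kills both the $m_p(x)/x$ term and the truncated integral. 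Your version is slightly more elementary --- it needs neither the differentiability of $m_p$ nor l'Hospital --- and it makes the rate of decay of the error terms explicit; the paper's version is shorter on the page and produces the intermediate fact $\ell_p'(x)\to-\gamma$ as a byproduct. One small point worth recording in a polished write-up: the argument of your logarithm, $\Gamma(x+1)-\tfrac1p\int_0^{m_p(x)}t^xe^{-t}\,dt$, is positive because it equals $m_p(x)^xe^{-m_p(x)}/p$ by \eqref{eq:m_p^x}, so the manipulation is legitimate.
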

\begin{proof}
 Differentiation of \eqref{eq:m_p^x} yields after some manipulation
 $$
 m_p(x)^x\ell_p'(x)e^{-m_p(x)}=p\Gamma'(x+1)-\int_0^{m_p(x)}t^xe^{-t}\log t\, dt,
 $$
where $\ell_p(x)=x\log m_p(x)$. From Lemma \ref{lemma:1} and Lemma \ref{lemma:m_p^x} it follows that $\lim_{x\to 0}\ell_p'(x)=\Gamma'(1)=-\gamma$. Applying l'Hospital's rule we obtain
$$
\lim_{x\to 0}\log u_p(x)=\lim_{x\to 0}\frac{\ell_p(x)-\log p}{x}=\lim_{x\to 0}\ell_p'(x)=-\gamma
$$
and this completes the proof.\end{proof}

The function $u_p$ appears when making the change of variable $s=e^{-\lx}t$ in the relation \eqref{eq:m} which then becomes
$$
\int_0^{u_p(x)}e^{-se^{\lx}+(x-1)\log s}\, ds=\Gamma(x).
$$
Differentiation gives us
\begin{align*}
u_p'(x)e^{-u_p(x)e^{\lx}}u_p(x)^{x-1}&=-\int_0^{u_p(x)}e^{-se^{\lx}}s^{x}\, ds \, e^{\lx}\tfrac{\log p}{x^2}\\
&\phantom{=}-\int_0^{u_p(x)}e^{-se^{\lx}}s^{x-1}\log s \, ds+\Gamma'(x).
\end{align*}
The limit of the sum of the two last terms on the right-hand side evaluates as $\infty-\infty$. In order to overcome this difficulty we rewrite as follows.
\begin{align*}
u_p'(x)e^{-u_p(x)e^{\lx}}u_p(x)^{x-1}&=-\int_0^{u_p(x)}e^{-se^{\lx}}s^{x}\, ds\, e^{\lx}\tfrac{\log p}{x^2}\\
&\phantom{=}+\int_0^{u_p(x)}\left(1-e^{-se^{\lx}}\right)s^{x-1}\log s \, ds\\
&\phantom{=}+\int_0^{u_p(x)}\frac{e^{-s}-1}{s}s^{x}\log s \, ds+
\int_{u_p(x)}^{\infty}\frac{e^{-s}}{s}s^{x}\log s \, ds.
\end{align*}
Putting
\begin{align*}
I_1(x)&=\int_0^{u_p(x)}e^{-se^{\lx}}s^{x}\, ds,\\
I_2(x)&=\int_0^{u_p(x)}\left(1-e^{-se^{\lx}}\right)s^{x-1}\log s \, ds,\\
I_3(x)&=\int_0^{u_p(x)}\frac{e^{-s}-1}{s}s^{x}\log s \, ds,\\
I_4(x)&=\int_{u_p(x)}^{\infty}\frac{e^{-s}}{s}s^{x}\log s \, ds,
\end{align*}
the relation above can be written as
\begin{equation}
 \label{eq:diff}
u_p'(x)e^{-m_p(x)}u_p(x)^{x-1}=-I_1(x)e^{\lx}\tfrac{\log p}{x^2}+I_2(x)+I_3(x)+I_4(x).
\end{equation}
\begin{prop}
\label{prop:updiff}
 The limit $\lim_{x\to 0}u_p'(x)$ exists and equals
 $$
 e^{-\gamma}\left(\int_0^{e^{-\gamma}}\frac{e^{-s}-1}{s}\log s \, ds+\int_{e^{-\gamma}}^{\infty}\frac{e^{-s}}{s}\log s \, ds\right).
 $$
\end{prop}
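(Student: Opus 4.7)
The plan is to pass to the limit $x\to 0^+$ in the identity \eqref{eq:diff}. By Proposition \ref{prop:up-at-zero} and Lemma \ref{lemma:1}, $u_p(x)\to e^{-\gamma}$ and $m_p(x)\to 0$, so the prefactor on the left-hand side satisfies $e^{-m_p(x)}u_p(x)^{x-1}\to 1\cdot(e^{-\gamma})^{-1}=e^{\gamma}$. Hence it suffices to show that the right-hand side of \eqref{eq:diff} has a limit $L$, and then $\lim_{x\to 0}u_p'(x)=e^{-\gamma}L$ will match the stated formula provided $L$ is the quantity in parentheses.

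I would treat the four terms separately. The first term vanishes: since $p\in(0,1)$, the quantity $e^{\lx}=p^{1/x}$ decays super-polynomially as $x\to 0^+$, so $e^{\lx}/x^2\to 0$, while $I_1(x)$ is bounded for small $x$ via $e^{-se^{\lx}}s^x\leq 1+s$, giving $I_1(x)\leq u_p(x)(1+u_p(x))$. Similarly $I_2(x)\to 0$: using $0\leq 1-e^{-y}\leq y$ and the uniform bound $s^x\leq 1$ on $(0,1]$ for $x\in[0,1]$, one gets
$$|I_2(x)|\leq e^{\lx}\int_0^{u_p(x)}s^x|\log s|\,ds=O(e^{\lx}).$$

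For $I_3$ and $I_4$ I would apply dominated convergence after writing each integral over $(0,\infty)$ with an indicator, for instance
$$I_3(x)=\int_0^{\infty}\mathbf{1}_{(0,u_p(x))}(s)\,\frac{e^{-s}-1}{s}s^x\log s\,ds,$$
so that the integrand converges a.e.\ to $\mathbf{1}_{(0,e^{-\gamma})}(s)\frac{e^{-s}-1}{s}\log s$ as $u_p(x)\to e^{-\gamma}$. Restricting to $x$ small enough that $u_p(x)\leq 1$, an integrable dominant for $I_3$ is $\frac{|e^{-s}-1|}{s}|\log s|\mathbf{1}_{(0,1)}(s)$, integrable since $(e^{-s}-1)/s$ is bounded near $0$ and $|\log s|$ is locally integrable. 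For $I_4$, the bound $s^x\leq 1+s$ (valid for $x\in[0,1]$) gives the dominant $\frac{e^{-s}}{s}(1+s)|\log s|$ on $(c,\infty)$, where $c>0$ is any lower bound for $u_p(x)$ near $x=0$. Dominated convergence then yields
$$I_3(x)\longrightarrow\int_0^{e^{-\gamma}}\frac{e^{-s}-1}{s}\log s\,ds,\qquad I_4(x)\longrightarrow\int_{e^{-\gamma}}^{\infty}\frac{e^{-s}}{s}\log s\,ds.$$

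Adding the four limits and dividing by $e^{\gamma}$ produces the claimed value. The main obstacle (the $\infty-\infty$ indeterminacy appearing in a naive differentiation of $\int_0^{u_p(x)}e^{-se^{\lx}+(x-1)\log s}\,ds=\Gamma(x)$) has already been removed by the splitting into $I_3$ and $I_4$ carried out in the excerpt; what remains is the routine production of uniform integrable dominants on a small interval $(0,x_0]$.
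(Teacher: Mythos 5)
Your proposal is correct and follows essentially the same route as the paper: pass to the limit in \eqref{eq:diff}, kill the $I_1$-term via the super-polynomial decay of $e^{\log p/x}/x^2$, kill $I_2$ via $|1-e^{-y}|\le y$, and evaluate $I_3$, $I_4$ by dominated convergence using $u_p(x)\to e^{-\gamma}$. You merely supply the explicit integrable dominants that the paper leaves as ``easy to see.''
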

\begin{proof}
It is easy to see that (writing $u_p(0)$ instead of $e^{-\gamma}$)
$$
\lim_{x\to 0}I_1(x)=u_p(0),
$$
$$
\lim_{x\to 0}I_3(x)=\int_0^{u_p(0)}\frac{e^{-s}-1}{s}\log s \, ds,
$$
$$
\lim_{x\to 0}I_4(x)=\int_{u_p(0)}^{\infty}\frac{e^{-s}}{s}\log s \, ds,
$$
and using the estimate $|1-e^{-y}|\leq y$ for $y\geq 0$ we obtain
$$
\lim_{x\to 0}I_2(x)=0.
$$
These relations yield
$$
\lim_{x\to 0}u_p'(x)e^{-m_p(x)}u_p(x)^{x-1}=\int_0^{u_p(0)}\frac{e^{-s}-1}{s}\log s \, ds+\int_{u_p(0)}^{\infty}\frac{e^{-s}}{s}\log s \, ds,
$$
i.e.\
$$
\lim_{x\to 0}u_p'(x)=u_p(0)\left(\int_0^{u_p(0)}\frac{e^{-s}-1}{s}\log s \, ds+\int_{u_p(0)}^{\infty}\frac{e^{-s}}{s}\log s \, ds\right).
$$
This completes the proof.
\end{proof}

By differentiation of the relation \eqref{eq:diff} one obtains an involved expression containing only one term with $u_p''(x)$. It turns out that it is possible to compute the limit $u_p''(0)$ and express it in terms of $u_p'(0)$ and $u_p(0)$. The next step is to show that any derivative $u_p^{(n)}(x)$ has a limit as $x$ tends to $0$, and hence that $u_p\in C^{\infty}([0,\infty))$.
\begin{prop}
\label{prop:limit-u-n}
The limit
$$
\lim_{x\to 0}u_p^{(n)}(x)
$$
exists for any $n\geq 0$ and is independent of $p$.
\end{prop}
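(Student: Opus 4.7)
My plan is to proceed by induction on $n$. The cases $n=0$ and $n=1$ are Propositions~\ref{prop:up-at-zero} and~\ref{prop:updiff}. For the inductive step I will assume that $u_p$ extends to a $C^n$-function on $[0,\infty)$ with $u_p^{(k)}(0)$ independent of $p$ for every $k\leq n$, and aim to show that $u_p^{(n+1)}(x)$ has a finite limit at $0$ which is also independent of $p$. The crucial observation is that $\alpha(x):=e^{\log p/x}$ extends to a $C^{\infty}$-function on $[0,\infty)$ with $\alpha^{(k)}(0)=0$ for every $k\geq 0$ (the classical flat function). Under the inductive hypothesis, $m_p(x)=u_p(x)\alpha(x)$ is therefore $C^n$ on $[0,\infty)$ with $m_p^{(k)}(0)=0$ for all $k\leq n$; a repeated application of the chain rule then shows that $e^{-m_p(x)}$ has Taylor jet $(1,0,\dots,0)$ of order $n$ at $0$.

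Next I differentiate the identity~\eqref{eq:diff} exactly $n$ times. Setting $B(x)=e^{-m_p(x)}u_p(x)^{x-1}$, the Leibniz rule yields
$$
\frac{d^n}{dx^n}\bigl(u_p'(x)B(x)\bigr)=u_p^{(n+1)}(x)B(x)+\sum_{j=0}^{n-1}\binom{n}{j}u_p^{(j+1)}(x)B^{(n-j)}(x).
$$
Since $B(0)=e^{\gamma}\neq 0$ and, by the jet calculation, $B^{(k)}(0)$ equals the $k$-th derivative at $0$ of $u_p(x)^{x-1}$ -- a quantity determined by $u_p^{(j)}(0)$ for $j\leq k$ -- the sum on the right has a $p$-independent limit at $0$ by the inductive hypothesis. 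It therefore suffices to show that the $n$-th derivative of the right-hand side of~\eqref{eq:diff} admits a $p$-independent limit at $0$; dividing by $B(x)$, which is bounded away from $0$ near $x=0$, then yields the desired conclusion for $u_p^{(n+1)}(x)$.

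The four terms on the right-hand side of~\eqref{eq:diff} are treated separately. For the $I_1$-contribution, the prefactor $-e^{\log p/x}\log p/x^{2}$ has a flat $C^{\infty}$-jet at $0$, while $I_1(x)=\int_0^{u_p(x)}e^{-s\alpha(x)}s^x\,ds$ is $C^n$ on $[0,\infty)$ with derivatives expressible in terms of $u_p^{(j)}$, $j\leq n$, and integrals with polynomial factors in $\log s$. The Leibniz rule then forces the $n$-th derivative of the product to tend to $0$. For $I_2$, every $x$-derivative of the integrand factor $1-e^{-s\alpha(x)}$ is a polynomial in $\alpha^{(j)}(x)$ times $e^{-s\alpha(x)}$, and thus vanishes at $x=0$; the boundary terms at $s=u_p(x)$ also vanish there because $1-e^{-m_p(0)}=0$, so $I_2^{(n)}(0+)=0$. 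The integrals $I_3$ and $I_4$ are ordinary parameter integrals; repeated differentiation produces integrals involving higher powers of $\log s$ and boundary terms containing only $u_p^{(j)}$, $j\leq n$, and all these pieces converge to finite, $p$-independent limits.

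The main obstacle I anticipate is the bookkeeping for $I_3$ and $I_4$: each differentiation introduces a new boundary contribution at $s=u_p(x)$ and a new factor of $\log s$ in the remaining integrand, and one must verify convergence at $s=0$ (using $(e^{-s}-1)/s$ being bounded there) and, for $I_4$, at $s=\infty$. Once finiteness of each contribution is confirmed, the expression for $u_p^{(n+1)}(0)$ depends only on $u_p^{(j)}(0)$ for $j\leq n$ and on explicit universal constants, so its independence of $p$ follows directly from the inductive hypothesis.
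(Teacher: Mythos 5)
Your proposal is correct and follows essentially the same route as the paper's proof: induction on the order of the derivative, $n$-fold differentiation of \eqref{eq:diff}, isolation of the term $u_p^{(n+1)}(x)e^{-m_p(x)}u_p(x)^{x-1}$ on the left (using that $e^{\log p/x}$ is flat at $0$, so $m_p$ and its derivatives vanish there), annihilation of the $I_1$- and $I_2$-contributions, and reduction of $I_3$, $I_4$ to boundary terms plus integrals with extra powers of $\log s$, all of which depend only on $u_p^{(j)}(0)$ for $j\le n$ and on $u_p(0)=e^{-\gamma}$. The only real difference is in how the limits of $\partial_x^j I_1$ and $\partial_x^j I_2$ are justified: the paper expands these integrals as explicit series (Lemmas \ref{lemma:I_1} and \ref{lemma:I_2}) and differentiates termwise, whereas you differentiate under the integral sign, which works but requires the bound $|1-e^{-y}|\le y$ to control the $s^{x-1}$ singularity in $I_2$ near $s=0$.
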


The proof of this proposition is carried out by an inductive argument. In order to do so some computational lemmas are needed. We shall also use the notation $\partial_x^j$ to denote the $j$th derivative when it is convenient.
 As one would expect, the function 
 \begin{equation}
 \label{eq:alpha_p}
 \alpha_p(x)=e^{\lx}
 \end{equation}
 plays a key role in the development. Its behaviour is determined by the following lemma.
\begin{lemma}
\label{lemma:p_n}
For any $n\geq 1$,
 $$
 \partial_x^n\left(e^{-1/x}\right)=p_n(1/x)e^{-1/x},
 $$
 where $p_n(t)=\sum_{k=n+1}^{2n}a_{k,n}t^k$ with $a_{k,n}\in \mathbb Z$. Furthermore $a_{2n,n}=1$, $a_{2n-1,n}=-n(n-1)$ and $a_{2n-2,n}=n(n-1)^2(n-2)/2$.
\end{lemma}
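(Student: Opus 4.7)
The plan is induction on $n$. The base case $n=1$ is immediate: $\partial_x(e^{-1/x}) = x^{-2} e^{-1/x}$, so $p_1(t) = t^2$, which matches all assertions (adopting the convention that $a_{k,n} = 0$ when $k$ falls outside $[n+1, 2n]$, so that the formulas for $a_{2n-1, n}$ and $a_{2n-2, n}$ hold vacuously in the small cases where those indices are out of range).

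For the induction step, I would differentiate the identity $\partial_x^n(e^{-1/x}) = p_n(1/x) e^{-1/x}$ once more using the product and chain rules, which gives
\begin{equation*}
\partial_x^{n+1}\bigl(e^{-1/x}\bigr) = \tfrac{1}{x^2}\bigl(p_n(1/x) - p_n'(1/x)\bigr) e^{-1/x},
\end{equation*}
so, setting $t = 1/x$, one obtains the key recursion
\begin{equation*}
p_{n+1}(t) = t^2\bigl(p_n(t) - p_n'(t)\bigr).
\end{equation*}
Integrality of the coefficients is preserved by this recursion automatically. Since $\deg p_n = 2n$ with leading coefficient $a_{2n,n} = 1$ by the inductive hypothesis, the recursion immediately gives $\deg p_{n+1} = 2(n+1)$ with leading coefficient $1$; and since the smallest-degree term of $p_n$ is $t^{n+1}$, the polynomial $p_n'$ starts at degree $n$, so $p_{n+1}$ starts at degree $n+2 = (n+1)+1$, as required.

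The two closed-form coefficient formulas are then obtained by collecting contributions to the recursion. The coefficient of $t^{2n+1}$ in $p_{n+1}$ picks up $a_{2n-1, n}$ from $t^2 p_n(t)$ and $-2n \cdot a_{2n, n}$ from $-t^2 p_n'(t)$, giving $-n(n-1) - 2n = -n(n+1)$, which matches $-(n+1)n$ as required for $a_{2(n+1)-1, n+1}$. The coefficient of $t^{2n}$ in $p_{n+1}$ receives $a_{2n-2, n}$ from $t^2 p_n(t)$ and $-(2n-1) a_{2n-1, n} = (2n-1)n(n-1)$ from $-t^2 p_n'(t)$; summing and simplifying
\begin{equation*}
n(n-1)\bigl[(n-1)(n-2)/2 + (2n-1)\bigr] = n^2(n-1)(n+1)/2
\end{equation*}
matches the target $(n+1)n^2(n-1)/2$. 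The only real obstacle in the argument is this last algebraic simplification, which is entirely routine; otherwise the proof is pure bookkeeping.
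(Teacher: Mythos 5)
Your proof is correct and follows exactly the route the paper indicates: induction via the recursion $p_{n+1}(t)=t^2\bigl(p_n(t)-p_n'(t)\bigr)$, with the coefficient formulas extracted from that recursion (the paper omits these routine details, which you have filled in correctly).
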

\begin{proof}
 This is a standard induction argument, and the sequence $\{p_n\}$ is easily shown to satisfy $p_{n+1}(t)=t^2(p_n(t)-p_n'(t))$.
\end{proof}
\begin{lemma}
\label{lemma:m-k}
Suppose that the limit $\lim_{x\to 0}u_p^{(j)}(x)$ exists in $\mathbb R$ for $j=0,\ldots,q$. Then $\lim_{x\to 0}m_p^{(j)}(x)=0$ for $j=0,\ldots,q$.
\end{lemma}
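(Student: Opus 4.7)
The plan is to exploit the factorisation $m_p(x)=\alpha_p(x)u_p(x)$ that is built into the very definition \eqref{eq:u_p}, together with Lemma \ref{lemma:p_n}. Since $p\in(0,1)$ we have $-\log p>0$, so $\alpha_p(x)=e^{\log p/x}$ is of the form $e^{-c/x}$ with $c=-\log p>0$. Differentiating $m_p=\alpha_p u_p$ by Leibniz gives, for any $0\le j\le q$,
\begin{equation*}
m_p^{(j)}(x)=\sum_{k=0}^{j}\binom{j}{k}\alpha_p^{(k)}(x)\,u_p^{(j-k)}(x),
\end{equation*}
so it suffices to control each factor separately as $x\to 0^+$.

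For the first factor, I would adapt Lemma \ref{lemma:p_n} by rescaling: setting $g(y)=e^{-1/y}$, one has $\alpha_p(x)=g(x/c)$, and hence
\begin{equation*}
\alpha_p^{(k)}(x)=c^{-k}p_k(c/x)\,e^{-c/x}=\tilde p_k(1/x)\,\alpha_p(x)
\end{equation*}
for a polynomial $\tilde p_k$. Because $\alpha_p(x)\to 0$ faster than any power of $x$ as $x\to 0^+$, the polynomial blow-up of $\tilde p_k(1/x)$ is dominated, and therefore $\lim_{x\to 0^+}\alpha_p^{(k)}(x)=0$ for every $k\ge 0$ (the case $k=0$ being immediate since $e^{\log p/x}\to 0$). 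For the second factor, the hypothesis guarantees that each $u_p^{(j-k)}(x)$ has a finite real limit as $x\to 0^+$. Multiplying a bounded function by one that tends to zero, every summand above tends to zero, and the finite sum does too; this yields $\lim_{x\to 0^+}m_p^{(j)}(x)=0$ for $j=0,\dots,q$.

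There is no real obstacle here: the argument is essentially bookkeeping once the product structure $m_p=\alpha_p u_p$ is observed and Lemma \ref{lemma:p_n} is in hand. The only mild subtlety is the rescaling step needed to pass from $e^{-1/x}$ to $e^{-c/x}$, but this is a trivial consequence of the chain rule and does not disturb the degree bounds or the exponential decay.
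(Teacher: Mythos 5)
Your proof is correct and follows essentially the same route as the paper: apply Leibniz's rule to the factorisation $m_p=\alpha_p u_p$ and use that every derivative $\alpha_p^{(l)}(x)\to 0$ as $x\to 0$ (via Lemma \ref{lemma:p_n}) while the factors $u_p^{(j-l)}(x)$ stay bounded by hypothesis. The rescaling detail you supply to pass from $e^{-1/x}$ to $e^{\log p/x}$ is a correct elaboration of what the paper leaves implicit.
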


\begin{proof}This follows from the relation (with $\alpha_p$ as in \eqref{eq:alpha_p})
$$
m_p^{(j)}(x)=\sum_{l=0}^j\binom{j}{l}\alpha_p^{(l)}(x)u_p^{(j-l)}(x),
$$
combined with the fact that $\alpha_p^{(l)}(x)\to 0$ for $x\to 0$ for all $l\geq 0$.
\end{proof}

\begin{lemma}
 \label{lemma:I_1}
 $$
 I_1(x)=u_p(x)^{x+1}\left(\frac{1}{x+1}+\sum_{k=1}^{\infty}\frac{(-1)^{k}}{k!}\frac{m_p(x)^{k}}{x+k+1}\right).
 $$
\end{lemma}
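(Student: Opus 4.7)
The plan is a direct computation starting from the definition
$$
I_1(x)=\int_0^{u_p(x)}e^{-se^{\log p/x}}s^{x}\, ds,
$$
and then expanding the exponential as a power series and integrating termwise.

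First I would make the substitution $t=se^{\log p/x}$ (so that $s=te^{-\log p/x}$ and $ds=e^{-\log p/x}dt$). The upper limit transforms as $s=u_p(x)\mapsto t=u_p(x)e^{\log p/x}=m_p(x)$, using the definition of $u_p$. This yields
$$
I_1(x)=e^{-(x+1)\log p/x}\int_0^{m_p(x)}e^{-t}t^{x}\, dt
=\left(\frac{u_p(x)}{m_p(x)}\right)^{x+1}\int_0^{m_p(x)}e^{-t}t^{x}\, dt,
$$
where in the second step I use $e^{-\log p/x}=u_p(x)/m_p(x)$.

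Next I would insert the Taylor expansion $e^{-t}=\sum_{k\geq 0}(-1)^{k}t^{k}/k!$ into the remaining integral and interchange sum and integral. The interchange is justified since on the compact interval $[0,m_p(x)]$ the series converges uniformly. This gives
$$
\int_0^{m_p(x)}e^{-t}t^{x}\, dt=\sum_{k=0}^{\infty}\frac{(-1)^{k}}{k!}\frac{m_p(x)^{x+k+1}}{x+k+1}.
$$

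Finally, multiplying by the prefactor and pulling out $m_p(x)^{x+1}$, the $m_p(x)^{x+1}$ cancels with the denominator of $(u_p/m_p)^{x+1}$, leaving $u_p(x)^{x+1}$ times the sum, with the $k=0$ term written separately as $1/(x+1)$. This gives exactly the stated identity. There is no real obstacle; the only mild care is in the change of variable bookkeeping and in justifying termwise integration, which is immediate from uniform convergence.
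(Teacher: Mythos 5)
Your proof is correct and is essentially the same as the paper's: both rest on expanding the exponential in its Taylor series and integrating termwise, with your change of variables $t=se^{\log p/x}$ being only a cosmetic reshuffling of where the factors $e^{k\log p/x}$ get absorbed into $m_p(x)^k$. The bookkeeping with $e^{-\log p/x}=u_p(x)/m_p(x)$ and the uniform-convergence justification are both in order.
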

\begin{proof}
 Using the Taylor series expansion of the exponential function we get
 \begin{align*}
 I_1(x)&=\sum_{k=0}^{\infty}\frac{(-1)^k}{k!}e^{\lx k}\int_0^{u_p(x)}s^{x+k}\, ds\\
 &=\sum_{k=0}^{\infty}\frac{(-1)^k}{k!}e^{\lx k}\frac{u_p(x)^{x+k+1}}{x+k+1},
 \end{align*}
 which gives the desired formula.
\end{proof}
\begin{lemma}
\label{lemma:I_2}
 \begin{align*}
 I_2(x)&=u_p(x)^x\log u_p(x)\sum_{k=0}^{\infty}\frac{(-1)^k}{(k+1)!}\frac{m_p(x)^{k+1}}{x+k+1}\\
 &\phantom{M}-u_p(x)^x\sum_{k=0}^{\infty}\frac{(-1)^k}{(k+1)!}\frac{m_p(x)^{k+1}}{(x+k+1)^2}.
 \end{align*}
\end{lemma}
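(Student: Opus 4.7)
The plan is to expand the factor $1-e^{-s\alpha_p(x)}$ (with $\alpha_p(x)=e^{\log p/x}$) in its Maclaurin series, interchange summation and integration, and evaluate the resulting log-moment integrals explicitly. Using $1-e^{-sy}=\sum_{k=0}^{\infty}\frac{(-1)^k}{(k+1)!}(sy)^{k+1}$, applied with $y=\alpha_p(x)$, gives
$$
I_2(x)=\sum_{k=0}^{\infty}\frac{(-1)^k\alpha_p(x)^{k+1}}{(k+1)!}\int_0^{u_p(x)}s^{x+k}\log s\, ds.
$$
Termwise integration is legitimate because the partial sums of the Taylor series are dominated by $e^{s\alpha_p(x)}|\log s|$, which is integrable on the compact interval $[0,u_p(x)]$, so dominated convergence applies.

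Next I would evaluate the basic integral by one integration by parts (differentiating $\log s$, antidifferentiating $s^{x+k}$): for $x+k+1>0$,
$$
\int_0^{U}s^{x+k}\log s\, ds=\frac{U^{x+k+1}\log U}{x+k+1}-\frac{U^{x+k+1}}{(x+k+1)^2},
$$
the boundary terms at $0$ vanishing since $U^{x+k+1}\log U\to 0$ as $U\to 0^+$ and similarly for the remaining term. Substituting $U=u_p(x)$ and pulling out the common factor $u_p(x)^{x+k+1}$ yields
$$
I_2(x)=\sum_{k=0}^{\infty}\frac{(-1)^k}{(k+1)!}\alpha_p(x)^{k+1}u_p(x)^{x+k+1}\left(\frac{\log u_p(x)}{x+k+1}-\frac{1}{(x+k+1)^2}\right).
$$

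The final step is the key identification: by the very definition \eqref{eq:u_p} of $u_p$ one has $\alpha_p(x)u_p(x)=m_p(x)$, so $\alpha_p(x)^{k+1}u_p(x)^{x+k+1}=u_p(x)^x\, m_p(x)^{k+1}$. Factoring $u_p(x)^x$ out of both pieces and splitting the sum along the two parentheses gives exactly the two series claimed in the statement. There is no real obstacle here beyond bookkeeping: the only point to be slightly careful about is the justification of the interchange of sum and integral and the vanishing of the boundary term at $0$ in the integration by parts, both of which are completely routine on the compact interval $[0,u_p(x)]$.
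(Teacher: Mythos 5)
Your proof is correct and follows exactly the paper's route: the paper proves this lemma "along the same lines as Lemma \ref{lemma:I_1}," i.e.\ by expanding $1-e^{-se^{\log p/x}}$ in its Maclaurin series, integrating termwise, and using the same antiderivative formula for $\int_0^U s^{x+k}\log s\,ds$ that you derive, followed by the substitution $m_p(x)=e^{\log p/x}u_p(x)$. The only difference is that you spell out the (routine) justification of the interchange and the vanishing boundary term, which the paper leaves implicit.
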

\begin{proof}
 This follows the same lines as the proof of Lemma \ref{lemma:I_1}, noticing that
 $$
 \int_0^{u_p(x)}s^{x+k}\log s\,ds=\frac{u_p(x)^{x+k+1}}{x+k+1}\log u_p(x)-\frac{u_p(x)^{x+k+1}}{(x+k+1)^2}.
 $$
\end{proof}

\begin{lemma}
\label{lemma:limit-I-1-I-2}
 Suppose that $\lim_{x\to 0}u_p^{(j)}(x)$ exists in $\mathbb R$ for $j=0,\ldots,q$. Then
 \begin{align*}
 \lim_{x\to 0}\partial_x^jI_1(x)&=\lim_{x\to 0}\partial_x^j\left(\frac{u_p(x)^{x+1}}{x+1}\right),\ \text{and}\\
 \lim_{x\to 0}\partial_x^jI_2(x)&=0
\end{align*}
for $j=0,\ldots,q$.
\end{lemma}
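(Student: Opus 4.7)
The plan is to exploit the explicit series representations of Lemmas \ref{lemma:I_1} and \ref{lemma:I_2}. Writing
$$
I_1(x)=\frac{u_p(x)^{x+1}}{x+1}+R_1(x),\qquad R_1(x)=u_p(x)^{x+1}\sum_{k=1}^{\infty}\frac{(-1)^{k}}{k!}\frac{m_p(x)^{k}}{x+k+1},
$$
the first assertion reduces to showing $\partial_x^j R_1(x)\to 0$ as $x\to 0$ for $j=0,\ldots,q$. For $I_2$, every term in the series from Lemma \ref{lemma:I_2} already carries a factor $m_p(x)^{k+1}$ with $k\ge 0$, so $I_2$ is structurally of the same type as $R_1$.

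In both cases the quantity to be estimated has the shape $F(x)\,G(x,m_p(x))$, where $F(x)$ is a finite product of smooth factors built from $u_p(x)^{x+1}$, $u_p(x)^{x}=\exp(x\log u_p(x))$, $\log u_p(x)$ and rational expressions in $x$; by the inductive hypothesis $F$ is $C^{q}$ on a right neighbourhood of $0$, with $F^{(i)}(x)$ bounded there for $i=0,\ldots,q$. The inner function $G(x,y)$ is entire in $y$, $C^{\infty}$ in $x$ near $0$, and, crucially, satisfies $G(x,0)\equiv 0$.

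Applying the Leibniz rule to $F\cdot G(\cdot,m_p(\cdot))$ together with the multivariate chain rule for $G(x,m_p(x))$ produces a finite sum of terms of the form
$$
F^{(i)}(x)\cdot(\partial_1^{a}\partial_2^{b}G)(x,m_p(x))\cdot m_p^{(k_1)}(x)\cdots m_p^{(k_b)}(x),
$$
with $i+a+k_1+\cdots+k_b=j$, $k_1,\ldots,k_b\ge 1$ and $b\ge 0$. By Lemma \ref{lemma:m-k} and the induction hypothesis, $m_p^{(k)}(x)\to 0$ for $k=0,\ldots,q$. Hence if $b\ge 1$, the product of the $m_p^{(k_i)}$'s tends to zero; if $b=0$, the remaining factor $(\partial_1^{a}G)(x,m_p(x))$ tends to $(\partial_1^{a}G)(0,0)=0$ since $G(x,0)\equiv 0$ forces $\partial_1^{a}G(x,0)\equiv 0$. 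The factors $F^{(i)}(x)$ and $(\partial_1^{a}\partial_2^{b}G)(x,m_p(x))$ are bounded near $x=0$, so every term in the expansion vanishes in the limit, yielding both $\partial_x^j R_1(x)\to 0$ and $\partial_x^j I_2(x)\to 0$.

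The only technical point that requires care is the legitimacy of term-by-term differentiation of the series defining $G(x,y)$ and the interchange of limit and summation. This is routine: for $x$ in a small right neighbourhood of $0$ and $|y|\le M$, each of the relevant partial derivatives is term-by-term dominated by a series of the form $\sum C_{a,b}M^{k}/k!$ or $\sum C_{a,b}M^{k}/(k\cdot k!)$, hence converges uniformly and may be differentiated and evaluated in any order. This removes the only obstacle, and the conclusion follows.
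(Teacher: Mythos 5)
Your proof is correct and takes essentially the same route as the paper's, which is a one-line appeal to the series representations of Lemmas \ref{lemma:I_1} and \ref{lemma:I_2}, termwise differentiation, and the observation that every term carrying a positive power of $m_p(x)$ tends to zero (via Lemma \ref{lemma:m-k}). You have merely made explicit the Leibniz/chain-rule bookkeeping and the uniform-convergence justification that the paper leaves implicit.
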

\begin{proof}
 This follows from Lemma \ref{lemma:I_1} and Lemma \ref{lemma:I_2}, noticing that termwise differentiation is permitted and that the limit of any term involving $m_p(x)$ in a positive power equals zero.
\end{proof}

The integrals $I_3$ and $I_4$ are analyzed using Lemma \ref{lemma:integrals}. Here, $u_p$ could be replaced by any positive and smooth function, and the lemma is proved by induction.
\begin{lemma}
\label{lemma:integrals}
 The relation
 \begin{align*}
 \partial_x^j\int_0^{u_p(x)}g(s)\log s\, s^x\, ds&=\sum_{l=1}^j\partial_x^{j-l}\left(u_p'(x)u_p(x)^xg(u_p(x))(\log u_p(x))^l\right)\\
 &\phantom{M}+\int_0^{u_p(x)}g(s)(\log s)^{j+1}\, s^x \, ds
 \end{align*}
 holds when $g\in C^{\infty}([0,\infty))$ and the relation
 \begin{align*}
 \partial_x^j\int_{u_p(x)}^{\infty}h(s)\log s\, s^x\, ds&=-\sum_{l=1}^j\partial_x^{j-l}\left(u_p'(x)u_p(x)^xh(u_p(x))(\log u_p(x))^l\right)\\
 &\phantom{M}+\int_{u_p(x)}^{\infty}h(s)(\log s)^{j+1}\, s^x \, ds
 \end{align*}
 holds when $h\in C^{\infty}((0,\infty))$ and decays exponentially at infinity.
\end{lemma}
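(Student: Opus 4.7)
The plan is to prove both formulas simultaneously by induction on $j$, since they share essentially the same mechanism. The base case $j=0$ is immediate: the sum $\sum_{l=1}^0$ is empty and the integral term on the right coincides literally with the integral on the left, so there is nothing to show.

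For the inductive step, assume the first formula holds for some $j\geq 0$ and apply $\partial_x$ to both sides. On the right, differentiating the sum simply shifts the power of $\partial_x$ outside from $j-l$ to $j+1-l$. Differentiating the integral $\int_0^{u_p(x)}g(s)(\log s)^{j+1}s^x\, ds$ via Leibniz's rule combined with the fundamental theorem of calculus produces two contributions: a boundary term
$$
u_p'(x)\, g(u_p(x))\, (\log u_p(x))^{j+1}\, u_p(x)^x,
$$
which is exactly the $l=j+1$ summand $\partial_x^0(u_p'(x)u_p(x)^xg(u_p(x))(\log u_p(x))^{j+1})$ missing from the sum to extend it from $l\leq j$ to $l\leq j+1$; and an interior term $\int_0^{u_p(x)}g(s)(\log s)^{j+2}s^x\, ds$, which is the correct integral at level $j+1$. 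Collecting these terms reproduces the formula at level $j+1$.

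The second formula is proved identically, with two cosmetic changes. First, applying $\partial_x$ to $\int_{u_p(x)}^\infty h(s)(\log s)^{j+1}s^x\, ds$ yields a boundary contribution with the opposite sign (since $u_p(x)$ is now the lower limit), which explains the minus sign in front of the sum and produces the $l=j+1$ term consistently. Second, the interchange of differentiation and integration on $[u_p(x),\infty)$ must be justified: the exponential decay of $h$ ensures that the integrand and all its $x$-derivatives are dominated, uniformly for $x$ in a compact subset of $(0,\infty)$, by an integrable function of $s$ (each $x$-derivative contributes at most a power of $\log s$, which is absorbed by the exponential decay), so dominated convergence applies.

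There is no real obstacle here; the argument is purely formal bookkeeping once one sees that the boundary term of the Leibniz rule at step $j$ is designed to become the new top summand ($l=j+1$) at step $j+1$. The only point requiring a little care is the differentiation under the integral sign in the improper case, which is handled by the stipulated exponential decay of $h$.
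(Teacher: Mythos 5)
Your induction is correct and is exactly the route the paper indicates (it states only that "the lemma is proved by induction"): the boundary term from differentiating the integral at level $j$ supplies the new $l=j+1$ summand, and the extra $\log s$ from $\partial_x s^x$ raises the power in the remaining integral. The only detail you elide is justifying differentiation under the integral sign near the singular endpoint $s=0$ in the first formula (dominate by $C|\log s|^{j+2}s^{x_0}$ for $x\geq x_0>0$), which is routine and does not affect the argument.
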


{\it Proof of Proposition \ref{prop:limit-u-n}.}
 Suppose that for $j=0,\ldots,q$, $\lim_{x\to 0}\partial_x^ju_p(x)$ exists in $\mathbb R$ and is independent of $p$.  We set out to verify that $\lim_{x\to 0}\partial_x^{q+1}u_p(x)$ exists in $\mathbb R$ and is independent of $p$.
Differentiation of \eqref{eq:diff} $q$ times yields 
\begin{equation}
 \label{eq:diff-p-times}
\partial_x^q\left(u_p'(x)e^{-m_p(x)}u_p(x)^{x-1}\right)=\partial_x^q\left(I_1(x)\alpha_p'(x)+I_2(x)+I_3(x)+I_4(x)\right).
\end{equation}
The left hand side becomes
\begin{align*}
 \lefteqn{\partial_x^q\left(u_p'(x)e^{-m_p(x)}u_p(x)^{x-1}\right)=}\\
 &u_p^{(q+1)}(x)e^{-m_p(x)}u_p(x)^{x-1}
 +\sum_{j=0}^{q-1}\binom{q}{j}u_p^{(j+1)}(x)\partial_x^{q-j}\left(e^{-m_p(x)}u_p(x)^{x-1}\right).
\end{align*}
Notice that according to Lemma \ref{lemma:m-k}, $m_p^{(k)}(x)=o(1)$ as $x\to 0$ for $k\leq q$, and therefore we get
$$
\partial_x^{q-j}\left(e^{-m_p(x)}u_p(x)^{x-1}\right)=\partial_x^{q-j}u_p(x)^{x-1}+o(1).
$$
This gives
\begin{align*}
\partial_x^q\left(u_p'(x)e^{-m_p(x)}u_p(x)^{x-1}\right)&=u_p^{(q+1)}(x)e^{-m_p(x)}u_p(x)^{x-1}+\\
&\phantom{=}\sum_{j=0}^{q-1}\binom{q}{j}u_p^{(j+1)}(x)\partial_x^{q-j}u_p(x)^{x-1}+o(1).
\end{align*}
The right hand side of \eqref{eq:diff-p-times} is rewritten as
\begin{align*}
\partial_x^{q}&\left(I_1(x)\alpha_p'(x)+I_2(x)+I_3(x)+I_4(x)\right)\\
&=\sum_{j=0}^{q}\binom{q}{j}\partial_x^{j}I_1(x)\alpha_p^{(1+q-j)}(x)+\partial_x^{q}I_2(x)+\partial_x^{q}I_3(x)+ \partial_x^{q}I_4(x).
\end{align*}
From Lemma \ref{lemma:p_n} it follows that $\lim_{x\to 0}\alpha_p^{(k)}(x)=0$ for any $k\geq 0$ and by Lemma \ref{lemma:limit-I-1-I-2} and Lemma \ref{lemma:integrals} we see that, with $g(s)=(e^{-s}-1)/s$ and $h(s)=e^{-s}/s$,
\begin{align*}
\partial_x^{q}&\left(I_1(x)\alpha_p'(x)+I_2(x)+I_3(x)+I_4(x)\right)\\
&=\sum_{j=1}^q\partial_x^{q-j}\left(u_p'(x)u_p(x)^x(g(u_p(x))-h(u_p(x)))(\log u_p(x))^j\right)
\\
&\phantom{=}+\int_0^{u_p(x)}g(s)(\log s)^{q+1}\, s^x \, ds
 +\int_{u_p(x)}^{\infty}h(s)(\log s)^{q+1}\, s^x \, ds+o(1)\\
 &=-\sum_{j=1}^q\partial_x^{q-j}\left(u_p'(x)u_p(x)^{x-1}(\log u_p(x))^j\right)\\
 &\phantom{=}+\int_0^{u_p(0)}g(s)(\log s)^{q+1}\, ds
 +\int_{u_p(0)}^{\infty}h(s)(\log s)^{q+1}\, ds+o(1).
 \end{align*}
 Combination of these relations yield
 \begin{align*}
  \lefteqn{u_p^{(q+1)}(x)e^{-m_p(x)}u_p(x)^{x-1}}\\
  &=
  -\sum_{j=1}^q\partial_x^{q-j}\left(u_p'(x)u_p(x)^{x-1}(\log u_p(x))^j\right)
  -\sum_{j=0}^{q-1}\binom{q}{j}u_p^{(j+1)}(x)\partial_x^{q-j}u_p(x)^{x-1}\\
 &\phantom{=}+\int_0^{u_p(0)}g(s)(\log s)^{q+1}\, ds
 +\int_{u_p(0)}^{\infty}h(s)(\log s)^{q+1}\, ds+o(1).
 \end{align*}
The limit, as $x\to 0$, of the right hand side of this relation is an expression involving  $u_p^{(j)}(0)$, $j\leq q$, which by our induction hypothesis are independent of $p$.
\hfill $\square$

Letting 
\begin{equation*}
%\label{eq:s-numbers}
s_q=\int_0^{e^{-\gamma}}g(s)(\log s)^{q+1}\, ds
 +\int_{e^{-\gamma}}^{\infty}h(s)(\log s)^{q+1}\, ds,
 \end{equation*}
 where $g(s)=(e^{-s}-1)/s$ and $h(s)=e^{-s}/s$ it is straight forward to verify that
 \begin{align}
 \label{eq:s1}u_p'(0)&=e^{-\gamma}s_1,\\
 \label{eq:s2}u_p''(0)&=e^{-\gamma}\left(s_2+s_1^2+2\gamma s_1\right).
 \end{align}
\section{Asymptotic behaviour of the quantiles at zero}
\label{sec:m-at-zero}
The first result is simply the Taylor expansion of $u_p$.
\begin{prop}
 As $x$ tends to $0$ we have, for any $n$,
 $$
 m_p(x)=e^{\lx}\left(\sum_{k=0}^n\frac{u_p^{(k)}(0)}{k!}x^k+O\left(x^{n+1}\right) \right).
 $$
\end{prop}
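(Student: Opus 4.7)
The proposition is essentially an immediate consequence of Proposition \ref{prop:limit-u-n}, so the plan is very short. From the definition \eqref{eq:u_p} we have the identity $m_p(x)=e^{\lx}u_p(x)$, so the claim reduces to a Taylor expansion with remainder for $u_p$ at $0$.

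My plan is therefore as follows. First, I would invoke Proposition \ref{prop:limit-u-n}, which states that $u_p\in C^{\infty}([0,\infty))$; in particular all one-sided derivatives $u_p^{(k)}(0)$ exist in $\mathbb R$ for every $k\geq 0$, and $u_p^{(n+1)}$ is continuous on $[0,\infty)$. Second, I would apply the standard Taylor theorem with remainder for a function in $C^{n+1}$ on a right-neighborhood of $0$: there exists $\delta>0$ such that
$$
u_p(x)=\sum_{k=0}^n\frac{u_p^{(k)}(0)}{k!}x^k+R_n(x),\qquad R_n(x)=\frac{u_p^{(n+1)}(\xi_x)}{(n+1)!}x^{n+1}
$$
for some $\xi_x\in(0,x)$ whenever $0<x<\delta$. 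Since $u_p^{(n+1)}$ is continuous at $0$, it is bounded on $[0,\delta]$, and therefore $R_n(x)=O(x^{n+1})$ as $x\to 0$. Multiplying both sides by $e^{\lx}$ and using $m_p(x)=e^{\lx}u_p(x)$ yields the stated expansion.

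There is no substantial obstacle: all the work was carried out in Proposition \ref{prop:limit-u-n}, where the smoothness of $u_p$ on the closed half-line was established by the delicate induction on the derivatives. Once that smoothness is available, the present proposition is literally just Taylor's theorem applied to $u_p$ at the origin, combined with the defining relation for $u_p$.
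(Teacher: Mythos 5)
Your proposal is correct and matches the paper exactly: the paper introduces this result with the single remark that it is ``simply the Taylor expansion of $u_p$,'' relying on the smoothness of $u_p$ on $[0,\infty)$ established in Proposition \ref{prop:limit-u-n} and the identity $m_p(x)=e^{\lx}u_p(x)$. Your spelled-out version with the Lagrange remainder is just a more explicit rendering of the same argument.
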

\begin{prop}
\label{prop:m^n}
If $\{p_k\}$ are the polynomials from Lemma \ref{lemma:p_n} then the relation
 $$
 m_p^{(n)}(x)=\sum_{k=0}^n\binom{n}{k}(-\log p)^{-k}p_k\left(-\lx\right)e^{\lx}u_p^{(n-k)}(x)
 $$
 holds.
\end{prop}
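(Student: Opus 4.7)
The approach is simply to differentiate the factorization $m_p(x)=\alpha_p(x)u_p(x)$, which is immediate from the definition \eqref{eq:u_p} of $u_p$ together with the notation \eqref{eq:alpha_p} for $\alpha_p$. The Leibniz rule then gives
$$
m_p^{(n)}(x)=\sum_{k=0}^n\binom{n}{k}\alpha_p^{(k)}(x)\,u_p^{(n-k)}(x),
$$
so the entire content of the proposition reduces to establishing the identity
$$
\alpha_p^{(k)}(x)=(-\log p)^{-k}\,p_k\!\left(-\tfrac{\log p}{x}\right)e^{\lx},
$$
where the polynomials $p_k$ are the ones supplied by Lemma \ref{lemma:p_n}.

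To prove this identity I would set $a=-\log p>0$ (using $p\in(0,1)$) and observe that $\alpha_p(x)=\phi(x/a)$, where $\phi(y)=e^{-1/y}$ is precisely the function whose derivatives are controlled by Lemma \ref{lemma:p_n}, namely $\phi^{(k)}(y)=p_k(1/y)e^{-1/y}$. Applying the chain rule $k$ times to the affine reparametrization $x\mapsto x/a$ produces a scaling factor $a^{-k}$, so that
$$
\alpha_p^{(k)}(x)=a^{-k}\,p_k(a/x)\,e^{-a/x}=(-\log p)^{-k}\,p_k\!\left(-\tfrac{\log p}{x}\right)e^{\lx}.
$$
Substituting this back into the Leibniz expansion yields exactly the formula claimed in the proposition.

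There is essentially no obstacle beyond bookkeeping: the proof is the product rule combined with a one-parameter rescaling of Lemma \ref{lemma:p_n}. The only care required is to track the signs so that $p_k$ is evaluated at the positive quantity $-\log p/x$ and the prefactor acquires the correct power of $-\log p$.
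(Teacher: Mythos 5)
Your proof is correct and follows exactly the paper's route: apply Leibniz' rule to $m_p=\alpha_p u_p$ and obtain $\alpha_p^{(k)}(x)=(-\log p)^{-k}p_k(-\lx)e^{\lx}$ by rescaling Lemma \ref{lemma:p_n}. You merely spell out the rescaling step that the paper leaves implicit.
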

\begin{proof}
 From Lemma \ref{lemma:p_n} we get that
 $$
 \partial_x^k\left(e^{\lx}\right)=(-\log p)^{-k}p_k\left(-\lx\right)e^{\lx}
 $$
 and the result follows from Leibniz' rule.
 %The second part is proved using that $p_k$ is a monic polynomial of degree $2k$.
\end{proof}
Proposition \ref{prop:m^n} gives us information on the asymptotic expansion of the derivatives of $m_p$:
\begin{cor} 
\label{cor:m^n}
For any $n\geq 0$ we have, recalling that $a_{l,k}$ are the coefficients of $p_k$ in Lemma \ref{lemma:p_n},
\begin{align*}
e^{-\lx}m_p^{(n)}(x)
&=\sum_{k=0}^n\sum_{l=1}^{k}\sum_{j=0}^{2k}\binom{n}{k}a_{l+k,k}\left(-\log p\right)^{l}\frac{u_p^{(n-k+j)}(0)}{j!\, x^{k+l-j}}+O(x)\\
&=\sum_{k=0}^{2n}\frac{z_{k,n}}{x^k}+O(x)
\end{align*}
for $x\to 0$,
where
\begin{align*}
z_{2n,n}&=e^{-\gamma}(-\log p)^{n},\ \text{and}\\
z_{2n-1,n}&=e^{-\gamma}(-\log p)^{n-1}\left(-s_1\log p-n(n-1)\right).
\end{align*}
\end{cor}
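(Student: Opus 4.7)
\medskip

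\noindent\textbf{Proof plan for Corollary \ref{cor:m^n}.}
The starting point is the closed form of $m_p^{(n)}$ given by Proposition \ref{prop:m^n}. The plan is to substitute the explicit shape of $p_k$ from Lemma \ref{lemma:p_n}, then expand $u_p^{(n-k)}$ by a Taylor polynomial (which is legitimate since $u_p\in C^\infty([0,\infty))$ by Proposition \ref{prop:limit-u-n}), and finally collect terms by powers of $1/x$ to read off the leading coefficients.

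First I would divide Proposition \ref{prop:m^n} through by $e^{\log p/x}$ and insert the representation $p_k(-\log p/x)=\sum_{m=k+1}^{2k}a_{m,k}(-\log p)^m/x^m$ from Lemma \ref{lemma:p_n}. After the index change $l=m-k$ (so $l$ runs from $1$ to $k$) the factor $(-\log p)^{-k}$ from Proposition \ref{prop:m^n} combines with $(-\log p)^{m}=(-\log p)^{l+k}$ to produce $(-\log p)^l$, yielding
$$
e^{-\log p/x}m_p^{(n)}(x)=\sum_{k=0}^n\sum_{l=1}^k\binom{n}{k}a_{l+k,k}(-\log p)^l\frac{u_p^{(n-k)}(x)}{x^{l+k}}.
$$
Next I would Taylor expand $u_p^{(n-k)}(x)=\sum_{j=0}^{2k}u_p^{(n-k+j)}(0)x^j/j!+O(x^{2k+1})$; after dividing by $x^{l+k}$ the remainder term has order $x^{2k+1-l-k}=x^{k-l+1}\geq x$, so it contributes only $O(x)$. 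This yields the first displayed expression. Regrouping according to the exponent $k+l-j$ and absorbing the positive powers of $x$ into the $O(x)$ term gives the second form $\sum_{k=0}^{2n}z_{k,n}/x^k+O(x)$.

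The final step is to identify $z_{2n,n}$ and $z_{2n-1,n}$ by enumerating the triples $(k,l,j)$ that achieve each exponent. For $k+l-j=2n$, the constraints $l\le k\le n$ and $j\ge 0$ force $k=l=n$ and $j=0$; using $a_{2n,n}=1$ and $u_p(0)=e^{-\gamma}$ (Proposition \ref{prop:up-at-zero}) gives $z_{2n,n}=e^{-\gamma}(-\log p)^n$. For $k+l-j=2n-1$ there are exactly two admissible triples: $(k,l,j)=(n,n,1)$, contributing $(-\log p)^n u_p'(0)=(-\log p)^n e^{-\gamma}s_1$ via \eqref{eq:s1}, and $(k,l,j)=(n,n-1,0)$, contributing $a_{2n-1,n}(-\log p)^{n-1}u_p(0)=-n(n-1)e^{-\gamma}(-\log p)^{n-1}$ via Lemma \ref{lemma:p_n}; the case $(n-1,n,0)$ is excluded by $l\le k$. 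Summing produces $z_{2n-1,n}=e^{-\gamma}(-\log p)^{n-1}\bigl(-s_1\log p-n(n-1)\bigr)$.

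The only real obstacle is bookkeeping: keeping the triple sum straight and verifying that no other $(k,l,j)$ contribute to $x^{-2n}$ or $x^{-(2n-1)}$. Once the index ranges $l\le k\le n$ and $j\ge 0$ are used systematically, the enumeration is immediate, and the substitution of $a_{2n,n}$, $a_{2n-1,n}$, $u_p(0)$ and $u_p'(0)$ from Lemma \ref{lemma:p_n}, Proposition \ref{prop:up-at-zero} and \eqref{eq:s1} closes the proof.
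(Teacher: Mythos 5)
Your argument is correct and follows essentially the same route as the paper's own proof: substitute the explicit form of $p_k$ from Lemma \ref{lemma:p_n} into Proposition \ref{prop:m^n}, Taylor-expand $u_p^{(n-k)}$ to order $2k$, check that the remainders contribute only $O(x)$, and enumerate the index triples $(k,l,j)$ with $k+l-j\in\{2n,2n-1\}$, which you correctly identify as $\{(n,n,0)\}$ and $\{(n,n,1),(n,n-1,0)\}$. The only difference is that you spell out the reindexing $l=m-k$ and the remainder estimate $O(x^{k-l+1})=O(x)$, which the paper leaves implicit.
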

\begin{proof}
The first identity follows from the expansion
$$
u_p^{(n-k)}(x)=\sum_{j=0}^{2k}\frac{u_p^{(n-k+j)}(0)}{j!}x^j+O(x^{2k+1})
$$
and Proposition \ref{prop:m^n}.
To find $z_{2n,n}$ we identify the set of triples $(k,l,j)$ of indices for which $k+l-j=2n$. This is $\{(n,n,0)\}$.  To find $z_{2n-1,n}$ the corresponding set is $\{(n,n,1),(n,n-1,0)\}$. 
The formulae follow by computation, using \eqref{eq:s1}.
\end{proof}
\begin{rem}
We have
\begin{align*}
z_{2n-2,n}&=e^{-\gamma}(-\log p)^{n-2}\left(\frac{(\log p)^2}{2}(s_2+s_1^2+2\gamma s_1)+n(n-1)s_1\log p\right.\\
&\phantom{=}\left. +\frac{n(n-1)^2(n-2)}{2}-\frac{n^2(n-1)^2(n-2)}{2}s_1\log p\right).
\end{align*}
To see this notice that the set of indices is $\{(n,n,2),(n,n-1,1),(n,n-2,0),(n-1,n-1,0)\}$, use \eqref{eq:s2} and Lemma \ref{lemma:p_n}.
\end{rem}
\appendix

\section{Asymptotic behaviour at infinity; Maple code}
\label{sec:m-at-infinity}
The purpose of this appendix is to outline a method where the asymptotic behaviour of the function
$$
\varphi_p(x)=\log\left(\frac{x}{m_p(x)}\right)
$$
is used to find the asymptotic behaviour of $m_p$. Our method is quite elementary, basically only Lebesgue's dominated convergence theorem is used. The notation
$$
f(x)\sim \sum_{k=0}^{\infty}\frac{\beta_k}{x^k}, \quad x\to \infty
$$
means that for any $N$, $f(x)-\sum_{k=0}^N\beta_k/x^k=O(x^{-N-1})$ as $x\to \infty$.
Inserting Euler's integral representation of $\Gamma(x)$ in \eqref{eq:m}, changing variables to $u=\log (x/t)$ and performing some manipulations we obtain
\begin{align}
\label{eq:star}
\int_0^{\varphi_p(x)}e^{-x(e^{-u}+u)}\, du&=
(1-p)\int_0^{\infty}e^{-x(e^{-u}+u)}\, du-p\int_0^{\infty}e^{-x(e^{u}-u)}\, du.
\end{align}
By multiplication by $e^x$ and  the change of variable $s=\sqrt{x}u$ we get
\begin{align}
\label{eq:sqrt-varphi_p}
\int_0^{\sqrt{x}\varphi_p(x)}e^{-s^2h(-\nicefrac{s}{\sqrt{x}})}\, ds
&=(1-p)J_1(x)-pJ_2(x),
\end{align}
where $h$ is the positive and increasing function  on $\mathbb R$ defined as
\begin{equation}
\label{eq:def-h}
 h(u)=\frac{e^{u}-1-u}{u^2}=\sum_{k=0}^{\infty}\frac{1}{(k+2)!}u^k,
\end{equation}
and
$$
J_1(x)=\int_0^{\infty}e^{-s^2h(-\nicefrac{s}{\sqrt{x}})}\, ds,\quad 
J_2(x)=\int_0^{\infty}e^{-s^2h(\nicefrac{s}{\sqrt{x}})}\, ds.
$$ 
The asymptotic behaviour of $\varphi_p$ is found using the behaviour of the right hand side of \eqref{eq:sqrt-varphi_p} (see Proposition \ref{prop:XXX}) and by relating the behaviour of $\varphi_p$ to the integral in the left hand side of \eqref{eq:sqrt-varphi_p}. 
A fundamental sequence of polynomials is introduced in the next lemma (which is proved by induction).
\begin{lemma}
 \label{lemma:diff}
 Let $n\geq 0$. There exists a polynomial $Q_n$ of $n+1$ variables such that
 $$
 \partial_v^ne^{-s^2h(v)}=Q_n(s,h'(v),\ldots,h^{(n+1)}(v))e^{-s^2h(v)}.
 $$ 
 For fixed $v\in \mathbb R$, $Q_n$ is an even polynomial of $s$ of degree $2n$.
\end{lemma}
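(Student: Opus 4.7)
The plan is a straightforward induction on $n$. For the base case $n=0$, the identity holds trivially with $Q_0\equiv 1$, which is an even polynomial in $s$ of degree $0$.

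For the inductive step, assume that
$$
\partial_v^n e^{-s^2h(v)}=Q_n(s,h'(v),\ldots,h^{(n+1)}(v))\,e^{-s^2h(v)}
$$
for some polynomial $Q_n$ that is even in $s$ of degree $2n$. Applying $\partial_v$ to both sides and using the chain rule (treating $h^{(k)}(v)$ as the $k$th of the polynomial variables), one obtains
$$
\partial_v^{n+1}e^{-s^2h(v)}=\left(\sum_{k=1}^{n+1}\frac{\partial Q_n}{\partial y_k}\,h^{(k+1)}(v)\;-\;s^2h'(v)Q_n\right)e^{-s^2h(v)},
$$
where $y_k$ denotes the slot corresponding to $h^{(k)}(v)$. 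Defining $Q_{n+1}$ as the polynomial expression in parentheses gives the required representation, now depending also on $h^{(n+2)}(v)$.

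It remains to verify the parity and degree claims. Since $\partial Q_n/\partial y_k$ is a polynomial in $s$ of the same parity and of degree no larger than that of $Q_n$ (differentiation with respect to $y_k$ does not affect the $s$-variable), each summand $(\partial Q_n/\partial y_k)\,h^{(k+1)}(v)$ is even in $s$ and of degree at most $2n$. The remaining term $-s^2h'(v)Q_n$ is even in $s$ of degree exactly $2n+2$. Hence $Q_{n+1}$ is even in $s$ of degree $2(n+1)$, completing the induction.

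There is no genuine obstacle here; the only point requiring a moment's care is bookkeeping of the variable list so that $Q_{n+1}$ is correctly viewed as a polynomial in $s,h'(v),\ldots,h^{(n+2)}(v)$, and tracking that differentiation of $Q_n$ with respect to the $y_k$ slots preserves both the even parity and the degree bound in $s$.
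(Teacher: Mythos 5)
Your induction is correct and is exactly the argument the paper has in mind (the paper only remarks that the lemma "is proved by induction" and lists the first few $Q_n$, which agree with your recursion $Q_{n+1}=\sum_k \partial_{y_k}Q_n\, y_{k+1}-s^2y_1Q_n$). The only micro-point left implicit is that the top-degree term does not vanish for fixed $v$, which holds because the leading coefficient of $Q_n$ in $s$ is $(-1)^n h'(v)^n$ and $h'>0$ on all of $\mathbb R$.
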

The first few of these polynomials are $Q_1(s,y_1)=-s^2y_1$,
$Q_2(s,y_1,y_2)=s^4y_1^2-s^2y_2$ and
$Q_3(s,y_1,y_2,y_3)=-s^6y_1^3+3s^4y_1y_2-s^2y_3$.
\begin{lemma}
\label{lemma:J(x)}
The function 
$$
J(x)=\int_0^{\infty}e^{-s^2h(xs)}\, ds
$$
belongs to $C^{\infty}(\mathbb R)$ and
$$
J^{(n)}(x)=\int_0^{\infty}s^nQ_n(s,h'(xs),\ldots,h^{(n+1)}(xs))e^{-s^2h(xs)}\, ds,
$$
$Q_n$ being the polynomial introduced in Lemma \ref{lemma:diff}.
\end{lemma}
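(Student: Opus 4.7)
I would argue by induction on $n$. The case $n=0$ is the existence of $J(x)$ for all $x \in \mathbb R$, which follows from $h(u) \geq 1/2$ on $[0,\infty)$ and the asymptotic $h(u) \sim 1/|u|$ as $u \to -\infty$. For the inductive step, the chain rule applied to $x \mapsto e^{-s^2 h(xs)}$ produces one factor of $s$ per differentiation in $x$; combined with Lemma \ref{lemma:diff}, this yields
\begin{equation*}
 \partial_x^n e^{-s^2 h(xs)} = s^n Q_n\bigl(s, h'(xs), \ldots, h^{(n+1)}(xs)\bigr) e^{-s^2 h(xs)}.
\end{equation*}
The plan is then to differentiate $n$ times under the integral sign, which requires an $s$-integrable majorant of the right hand side, uniform in $x$ on each compact $K \subset \mathbb R$. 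Once such a majorant is supplied, a standard dominated-convergence argument gives $J \in C^n(\mathbb R)$ together with the claimed formula for $J^{(n)}$, and iterating in $n$ yields $J \in C^\infty(\mathbb R)$.

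The main obstacle, and the technical content of the argument, is constructing this uniform majorant; I would do so by splitting according to the sign of $xs$. On $xs \geq 0$, monotonicity of $h$ gives $e^{-s^2 h(xs)} \leq e^{-s^2/2}$; since $Q_n$ is polynomial in $s$ of degree $2n$ with coefficients polynomial in the $h^{(k)}(xs)$, and $h^{(k)}(u) \lesssim e^u$ for $u \geq 0$, one obtains $|s^n Q_n(\ldots)| e^{-s^2 h(xs)} \leq C_n s^{3n} e^{C_n xs - s^2/2}$; completing the square in the exponent produces a shifted Gaussian integrable in $s$ uniformly for $x$ in any bounded set. On $xs \leq 0$, the derivatives $h^{(k)}(xs)$ are uniformly bounded on $(-\infty, 0]$ (they decay to $0$ at $-\infty$), so the polynomial factor is controlled by $C_n s^{3n}$; and the elementary lower bound $h(u) \geq c/|u|$ for $u \leq -1$ yields $s^2 h(xs) \geq c_K s$ for $s \geq 1$ uniformly in $x \in K$, giving exponential-in-$s$ decay. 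Patching these regional estimates produces the required integrable majorant and closes the induction.
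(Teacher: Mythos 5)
Your proposal is correct and follows essentially the same route as the paper: the identity $\partial_x^n e^{-s^2h(xs)}=s^nQ_n(s,h'(xs),\ldots,h^{(n+1)}(xs))e^{-s^2h(xs)}$ via the chain rule, followed by a dominated-convergence argument with a majorant built by splitting on the sign of $xs$, using $h^{(j)}(u)\leq e^{u}$ and $h(xs)\geq\nicefrac12$ on the nonnegative side and the boundedness of the derivatives of $h$ together with the linear-in-$s$ growth of $s^2h(xs)$ on the negative side. The details match the paper's proof.
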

\begin{proof}
By the chain rule,
$$
\partial_x^ne^{-s^2h(xs)}=s^nQ_n(s,h'(xs),\ldots,h^{(n+1)}(xs))e^{-s^2h(xs)}.
$$
The proof of the lemma consists now of obtaining an integrable majorant of this expression for $-R\leq x\leq R$ where $R>0$ is arbitrary. Assume first that $x\geq 0$. For $u\geq 0$ we have an exponential upper bound on the derivatives of $h$:
$$
h^{(j)}(u)=\partial_u^j\left(\sum_{k=0}^{\infty}\frac{u^k}{(k+2)!}\right)=\sum_{k=0}^{\infty}\frac{1}{(k+j+2)(k+j+1)}\frac{u^k}{k!}\leq e^u.
$$
Writing
$$
s^nQ_n(s,y_1,\ldots,y_{n+1})=\sum_{(m_1,\ldots,m_{n+1})}\alpha_{(m_1,\ldots,m_{n+1})}s^{m_1}y_1^{m_2}\cdots y_{n+1}^{m_{n+1}}
$$
we get
$$
\left|\partial_x^ne^{-s^2h(xs)}\right|\leq
\sum_{(m_1,\ldots,m_{n+1})}|\alpha_{(m_1,\ldots,m_{n+1})}|s^{m_1}e^{m_2Rs}\cdots e^{m_{n+1}Rs}e^{-s^2/2},
$$
which is integrable on $[0,\infty)$.
Assuming that $x\leq 0$ we remark that any derivative of $h$ is bounded on $(-\infty,0]$ so that, for some polynomial $r(s)$,
$$
|\partial_x^ne^{-s^2h(xs)}|\leq r(s)e^{-s^2h(xs)}\leq r(s)e^{-s^2h(-Rs)},
$$
which is integrable on $[0,\infty)$. 
\end{proof}
To ease notation, let
\begin{equation}
\label{eq:defQ}
Q_k(s)=Q_k(s,h'(0),\ldots,h^{(k+1)}(0)).
\end{equation}
\begin{prop}
\label{prop:XXX}
As $x\to\infty$ the functions $J_1$ and $J_2$ admit the expansions
\begin{align*}
J_1(x)&\sim\sum_{k=0}^{\infty} \frac{(-1)^k}{k!}\frac{c_k}{x^{k/2}}\quad \text{and}\quad 
J_2(x)\sim\sum_{k=0}^{\infty} \frac{1}{k!}\frac{c_k}{x^{k/2}},
\end{align*}
where
$$
c_k=\int_0^{\infty}s^kQ_k(s)e^{-s^2/2}\, ds.
$$    
\end{prop}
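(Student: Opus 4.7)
The plan is to identify both $J_1$ and $J_2$ as evaluations of the single smooth function $J$ from Lemma \ref{lemma:J(x)} at the points $\mp 1/\sqrt{x}$, and then simply read off Taylor's expansion of $J$ around $0$.

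More precisely, first I would note that $J_2(x)=J(1/\sqrt{x})$ and $J_1(x)=J(-1/\sqrt{x})$ directly from the definitions. By Lemma \ref{lemma:J(x)}, $J\in C^{\infty}(\mathbb{R})$, so Taylor's theorem with remainder gives, for any $N\geq 0$,
$$
J(t)=\sum_{k=0}^{N}\frac{J^{(k)}(0)}{k!}t^{k}+R_{N}(t),
$$
with $R_N(t)=O(t^{N+1})$ as $t\to 0$ (the remainder is controlled by $\sup_{|t|\leq 1}|J^{(N+1)}(t)|$, which is finite since $J^{(N+1)}$ is continuous).

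Next I would evaluate $J^{(k)}(0)$. By Lemma \ref{lemma:J(x)},
$$
J^{(k)}(0)=\int_0^{\infty}s^{k}Q_{k}(s,h'(0),\ldots,h^{(k+1)}(0))e^{-s^{2}h(0)}\, ds,
$$
and from \eqref{eq:def-h} we have $h(0)=1/2$, so with the abbreviation \eqref{eq:defQ} this is exactly $c_k$.

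Substituting $t=1/\sqrt{x}$ yields the expansion of $J_2$, and $t=-1/\sqrt{x}$ (which contributes the sign $(-1)^k$ from $t^k$) yields the expansion of $J_1$; in both cases the remainder is $O(x^{-(N+1)/2})$, which matches the required notion of asymptotic expansion. There is really no serious obstacle here, since all the analytic work (differentiation under the integral sign, construction of integrable majorants on a neighbourhood of $0$, and the polynomial form of the derivatives) was already carried out in Lemma \ref{lemma:diff} and Lemma \ref{lemma:J(x)}; the proposition is essentially a corollary of the smoothness of $J$ at the origin.
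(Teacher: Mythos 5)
Your proposal is correct and follows exactly the paper's route: the paper's proof is the one-liner ``Immediate using $J_1(x)=J(-\nicefrac{1}{\sqrt{x}})$ and $J_2(x)=J(\nicefrac{1}{\sqrt{x}})$,'' and you have simply filled in the Taylor-expansion details, including the identification $J^{(k)}(0)=c_k$ via $h(0)=\nicefrac{1}{2}$.
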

\begin{proof}
Immediate using $J_1(x)=J(-\nicefrac{1}{\sqrt{x}})$ and $J_2(x)=J(\nicefrac{1}{\sqrt{x}})$.  
\end{proof}
The values of $c_k$ can be computed via the explicit formulas $Q_k$, the derivatives of $h$ at the origin and the elementary formula
$$
\int_0^{\infty}s^je^{-s^2/2}\, ds =2^{(j-1)/2}\Gamma\left( (j+1)/2\right).
$$
We remark that 
$J_2(x)=x^{\nicefrac{3}{2}-x}e^x\Gamma(x,x)$, where $\Gamma(x,x)$ is the complementary incomplete gamma function (see \cite[8.2.2]{dlmf}) and this can also be used to obtain the  expansion of $J_2$. For sharp error bounds and exponential asymptotics, see \cite{nemes}. 

Notice that when $L\in \mathbb R$, the relation
\begin{equation}
\label{eq:d-expansion}
\int_0^Le^{-s^2h(-\nicefrac{s}{\sqrt{x}})}\, ds=
\sum_{k=0}^{\infty} \frac{(-1)^k}{k!}\frac{d_k}{x^{k/2}},\quad x\in \mathbb R,
\end{equation}
where
$$
d_k=\int_0^{L}s^kQ_k(s)e^{-s^2/2}\, ds.
$$ 
is obtained directly from Lemma \ref{lemma:diff} and Taylor expansion.

 We let $L_p$ denote the $(1-p)$-quantile in the standard Gaussian distribution:
 \begin{equation}
\label{eq:L_p}
\int_0^{L_p}e^{-s^2/2}\, ds=(1-2p)\int_0^{\infty}e^{-s^2/2}\, ds=(1-2p)\sqrt{\frac{\pi}{2}}.
\end{equation}
\begin{lemma}
\label{lemma:asymp-varphi-ver-2}
The function $\varphi_p$ admits an asymptotic expansion at infinity:
$$
\varphi_p(x)\sim \sum_{k=1}^{\infty}\frac{b_k}{x^{k/2}}=\frac{L_p}{x^{\nicefrac{1}{2}}}+\frac{\nicefrac{1}{3}+\nicefrac{L_p^2}{6}}{x}+\frac{\nicefrac{L_p^3}{36}+\nicefrac{5L_p}{36}}{x^{\nicefrac{3}{2}}}+\ldots\ .
$$
\end{lemma}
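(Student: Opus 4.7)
The plan is to invert the functional equation \eqref{eq:sqrt-varphi_p} by a bootstrap argument. Setting $\psi(x) = \sqrt{x}\,\varphi_p(x)$ and
\[
F(L,x) = \int_0^L e^{-s^2 h(-s/\sqrt x)}\, ds,
\]
equation \eqref{eq:sqrt-varphi_p} reads $F(\psi(x),x) = R(x)$, where $R(x) := (1-p)J_1(x) - pJ_2(x)$ has the asymptotic expansion in powers of $x^{-1/2}$ supplied by Proposition \ref{prop:XXX}. Since the integrand is positive and smooth, $F(\cdot,x)$ is a strictly increasing smooth bijection of $\mathbb R$ onto its range, so $\psi(x)$ is uniquely and smoothly determined by $R(x)$.

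First I would establish the leading order $\psi(x)\to L_p$: for each fixed $L$ dominated convergence yields $F(L,x)\to\int_0^L e^{-s^2/2}\,ds$, and combined with the strict monotonicity of $F(\cdot,x)$ and of the limiting function, together with the defining identity \eqref{eq:L_p} of $L_p$, one concludes that $\psi(x)\to L_p$. This gives $b_1=L_p$. The higher order terms are produced by induction on $N$: assuming
\[
\psi(x) = L_p + \sum_{j=2}^{N} b_j x^{-(j-1)/2} + O(x^{-N/2}),
\]
write $\psi = L_p + \eta$ with $\eta = O(x^{-1/2})$ and Taylor-expand $F(L_p + \eta, x)$ around $L = L_p$ to sufficient order. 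Each coefficient $\partial_L^k F(L_p,x)$ itself admits an asymptotic expansion in $x^{-1/2}$, because $\partial_L F(L,x) = e^{-L^2 h(-L/\sqrt x)}$ and all higher $L$-derivatives are polynomial combinations of derivatives of $h$ evaluated at $-L/\sqrt x$, hence smooth functions of $1/\sqrt x$ at the origin in the sense of \eqref{eq:d-expansion} and Lemma \ref{lemma:J(x)}. The identity $F(L_p+\eta,x) = R(x)$ thus becomes a relation between two asymptotic expansions in $x^{-1/2}$; matching the coefficients at order $x^{-N/2}$ gives a linear equation in $b_{N+1}$ whose leading coefficient is $\partial_L F(L_p,\infty) = e^{-L_p^2/2}\neq 0$, so $b_{N+1}$ is uniquely determined.

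The explicit coefficients $b_2 = 1/3 + L_p^2/6$ and $b_3 = L_p^3/36 + 5L_p/36$ then follow by direct computation, using $h'(0) = 1/6$, $Q_0(s)=1$, $Q_1(s)=-s^2/6$, the corresponding $c_k$ and $d_k(L_p)$, and the identity $e^{-L_p^2/2}\cdot b_2 = \tfrac{L_p^2+2}{6}e^{-L_p^2/2}$ that emerges at first order. The main obstacle is not the formal matching but the bookkeeping of the remainder terms: one must verify at every step that the truncation errors in the Taylor expansion of $F$ in $L$ and in the asymptotic expansions of $\partial_L^k F(L_p,x)$ in $x^{-1/2}$ all land at order $o(x^{-N/2})$, so that the approximation of $\psi$ genuinely gains one power of $x^{-1/2}$ per induction step. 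This is ensured by the smoothness of $F$ in $L$ combined with a uniform positive lower bound on $\partial_L F(L,x)$ in a neighbourhood of $L_p$ valid for all sufficiently large $x$.
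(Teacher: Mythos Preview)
Your proposal is correct and follows essentially the same bootstrap strategy as the paper: establish the leading term $\sqrt{x}\,\varphi_p(x)\to L_p$ from \eqref{eq:sqrt-varphi_p} and then induct on the order of the expansion by Taylor-expanding about $L_p$ and matching coefficients, with $\partial_LF(L_p,\infty)=e^{-L_p^2/2}\neq 0$ guaranteeing solvability at each step. The only organizational difference is the order of the two expansions: the paper first expands the integrand in powers of $x^{-1/2}$ via the polynomials $Q_k$ of Lemma~\ref{lemma:diff} and then Taylor-expands each $q_k(s)=s^kQ_k(s)e^{-s^2/2}$ in $s$ about $L_p$ (leading to the recursion \eqref{eq:]} that feeds directly into the Maple code), whereas you expand $F$ in $L$ first and then each $\partial_L^kF(L_p,x)$ in $x^{-1/2}$; these are just two orderings of the same bivariate Taylor expansion.
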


\begin{proof}
From \eqref{eq:sqrt-varphi_p} and Proposition  \ref{prop:XXX} it follows that
$$
\lim_{x\to \infty}\int_0^{\sqrt{x}\varphi_p(x)}
e^{-s^2h(-\nicefrac{s}{\sqrt{x}})}\, ds= (1-2p)\sqrt{\frac{\pi}{2}}.
$$
For $s\geq 0$ and $x\geq 1$, $h(-\nicefrac{s}{\sqrt{x}})\geq h(-s)$ so that
$e^{-s^2h(-\nicefrac{s}{\sqrt{x}})}\leq e^{-s^2h(-s)}=e^{-(e^{-s}+s-1)}$,
which is integrable. For $s\leq 0$, $h(-\nicefrac{s}{\sqrt{x}})\geq h(0)=\nicefrac{1}{2}$ so that
$e^{-s^2h(-\nicefrac{s}{\sqrt{x}})}\leq e^{-\nicefrac{s^2}{2}}$,
which is also integrable. Let $\alpha=\limsup_{x\to \infty}\sqrt{x}\varphi_p(x)$ and choose a sequence $\{x_n\}$ tending to infinity such that $\lim_{n\to \infty}\sqrt{x_n}\varphi_p(x_n)=\alpha$. By dominated convergence it follows that
$$
\int_0^{\alpha}
e^{-\nicefrac{s^2}{2}}\, ds= (1-2p)\sqrt{\frac{\pi}{2}}.
$$
Repeating the argument using $\beta=\liminf_{x\to \infty}\sqrt{x}\varphi_p(x)$ we see that $\alpha=\beta=L_p$, where $L_p$ is given in \eqref{eq:L_p} and hence that $\lim_{x\to \infty}\sqrt{x}\varphi_p(x)=L_p$.

The next step is to rewrite relation \eqref{eq:sqrt-varphi_p} in the form
\begin{align}
\label{eq:sqrt-varphi_p-again-again}
\int_{L_p}^{\sqrt{x}\varphi_p(x)}
e^{-s^2h(-\nicefrac{s}{\sqrt{x}})}\, ds&= (1-p)J_1(x)
-pJ_2(x)-\int_{0}^{L_p}
e^{-s^2h(-\nicefrac{s}{\sqrt{x}})}\, ds.
\end{align}
By \eqref{eq:d-expansion} (with $L=L_p$) and Proposition \ref{prop:XXX} the right hand side of \eqref{eq:sqrt-varphi_p-again-again} admits the asymptotic expansion  
$$
\sum_{k=0}^{\infty}\frac{A_k}{x^{k/2}},
$$
where $A_k=\left((1-p)(-1)^kc_{k}-pc_k-(-1)^kd_k\right)/k!$. (Notice that $A_0=0$.) 
Looking at the left-hand side of \eqref{eq:sqrt-varphi_p-again-again}, we have
\begin{equation}
    \label{eq:lhs}
\int_{L_p}^{\sqrt{x}\varphi_p(x)}
e^{-s^2h(-\nicefrac{s}{\sqrt{x}})}\, ds= \sum_{k=0}^{\infty}\frac{(-1)^k}{k!}\int_{L_p}^{\sqrt{x}\varphi_p(x)}s^kQ_k(s)e^{-s^2/2}\, ds\, \frac{1}{x^{k/2}}.
\end{equation}
The Taylor series of $q_k(s)=s^kQ_k(s)e^{-s^2/2}$ centered at $L_p$
is inserted in \eqref{eq:lhs} and we thus get
\begin{equation}
\label{eq:asymp-idea}
\sum_{k,j=0}^{\infty}\frac{(-1)^kq_k^{(j)}(L_p)}{k!(j+1)!}\left(\sqrt{x}\varphi_p(x)-L_p\right)^{j+1} \frac{1}{x^{k/2}}=\sum_{k=1}^{N}\frac{A_k}{x^{k/2}}+o\left(\frac{1}{x^{N/2}}\right).
\end{equation}
% The asymptotic behaviour of $\sqrt{x}\varphi_p(x)-L_p$ is obtained from this relation. 
We prove, using \eqref{eq:asymp-idea}, that there is a sequence $\{a_k\}$ such that, for any $N\geq 0$, 
\begin{equation}
\label{eq:induc}
\sqrt{x}\varphi_p(x)-L_p=\sum_{k=1}^N\frac{a_k}{x^{k/2}}+o\left(\frac{1}{x^{N/2}}\right).
\end{equation}
(This proves the lemma.) For $N=0$ it is proved above. For $N=1$, \eqref{eq:asymp-idea} yields
$$
\sqrt{x}\left(\sqrt{x}\varphi_p(x)-L_p\right)\sum_{j=0}^{\infty}\frac{q_0^{(j)}(L_p)}{(j+1)!}\left(\sqrt{x}\varphi_p(x)-L_p\right)^{j} =A_1+o\left(1\right).
$$
Since the infinite sum on the left-hand side equals $q_0(L_p)+o(1)$ it follows that
$$
a_1=\nicefrac{A_1}{q_0(L_p)}=\nicefrac{1}{3}\left(1+\nicefrac{L_p^2}{2}\right).
$$
Assume now that \eqref{eq:induc}  holds for $N\geq 1$.
Then first of all, for $j\geq 0$, we have, for some numbers $a_{j,l}$, %(see Lemma \ref{lemma:asymp-power})
\begin{equation}
\label{eq:jthpower}
\left(\sqrt{x}\varphi_p(x)-L_p\right)^{j+1}=\sum_{l=1+j}^{N+j}\frac{a_{j,l}}{x^{l/2}}+o\left(\frac{1}{x^{(N+j)/2}}\right).
\end{equation}
It follows from \eqref{eq:asymp-idea} that
\begin{align*}
q_0(L_p)\left(\sqrt{x}\varphi_p(x)-L_p\right)&=
-\sum_{1\leq k+j\leq N}\frac{(-1)^kq_k^{(j)}(L_p)}{k!(j+1)!}\left(\sqrt{x}\varphi_p(x)-L_p\right)^{j+1} \frac{1}{x^{k/2}}\\
&\phantom{=}+\sum_{k=1}^{N+1}\frac{A_k}{x^{k/2}}+o\left(\frac{1}{x^{(N+1)/2}}\right),
\end{align*}
since $\left(\sqrt{x}\varphi_p(x)-L_p\right)^{j+1}/x^{k/2}=o(1/x^{(N+1)/2})$ for $j+k\geq N+1$ and this shows that $\sqrt{x}\varphi_p(x)-L_p$ has an asymptotic expansion up to order $(N+1)/2$. It also shows how to compute the coefficients $a_k(=a_{0,k})$ recursively. Let $T_N$ denote the triples of non-negative integers $(k,l,j)$ such that
$$
k+j\geq 1,\quad j+1\leq l\leq  N+j,\quad k+l=N+1.
$$
Then 
\begin{align}
\label{eq:]}
a_{N+1}=\frac{1}{q_0(L_p)}\left(\sum_{(k,l,j)\in T_N}\frac{(-1)^{k+1}q_k^{(j)}(L_p)}{k!(j+1)!}a_{j,l}+A_{N+1}\right).
\end{align}
Since $T_1=\{(1,1,0),(0,2,1)\}$ and $a_{1,2}=a_1^2$ we find that 
$a_2=\nicefrac{L_p^3}{36}+\nicefrac{5L_p}{36}$.
\end{proof}
\begin{rem}
    It is evident that also $J_1'$ and $J_2'$ have asymptotic expansions at infinity. From the relation 
    \begin{align*}
        (\sqrt{x}\varphi_p(x))'e^{-(\sqrt{x}\varphi_p(x))^2}h(&-\sqrt{x}\varphi_p(x)/\sqrt{x})=(1-p)J_1'(x)-pJ_2'(x)\\
        &\phantom{=}+\int_0^{\sqrt{x}\varphi_p(x)}e^{-2^2h(-s/\sqrt{x})}h'(-s/\sqrt{x})s^3\, ds\frac{1}{2x^{3/2}}
    \end{align*}
    it follows that also $(\sqrt{x}\varphi_p(x))'$ has an asymptotic expansion at infinity.
\end{rem}
\begin{thm}
\label{thm:mp-asymp}
The $p$-quantile $m_p$ has an asymptotic expansion at infinity:
$$
m_p(x)\sim \sum_{n=-2}^{\infty}\frac{\tau_n}{x^{n/2}}=x-L_p\, x^{\nicefrac{1}{2}}+\nicefrac{(L_p^2-1)}{3}+(\nicefrac{L_p(7-L_p^2)}{36})\, x^{-\nicefrac{1}{2}}+\ldots.
$$
\end{thm}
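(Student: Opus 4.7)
The plan is to exploit the defining relation $\varphi_p(x)=\log(x/m_p(x))$, which gives
$$
m_p(x)=x\, e^{-\varphi_p(x)}.
$$
By Lemma \ref{lemma:asymp-varphi-ver-2} we already know that $\varphi_p$ admits an asymptotic expansion in powers of $x^{-1/2}$ starting at $x^{-1/2}$, so in particular $\varphi_p(x)\to 0$ as $x\to\infty$. The idea is then to feed this expansion into the Taylor series of the exponential at the origin and multiply through by $x$.

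More concretely, fix $N\geq 0$. I would write
$$
e^{-\varphi_p(x)}=\sum_{n=0}^{N+2}\frac{(-\varphi_p(x))^n}{n!}+O\bigl(\varphi_p(x)^{N+3}\bigr),
$$
and, using $\varphi_p(x)=O(x^{-1/2})$, observe that the remainder is $O(x^{-(N+3)/2})$. Next, substitute the truncated expansion
$$
\varphi_p(x)=\sum_{k=1}^{N+2}\frac{b_k}{x^{k/2}}+O\bigl(x^{-(N+3)/2}\bigr)
$$
into each power $\varphi_p(x)^n$ for $n=0,\ldots,N+2$. Since the $n$-th power is itself $O(x^{-n/2})$, any inner truncation error contributes at most $O(x^{-(N+3)/2})$ after being multiplied by the remaining factors. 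Collecting like half-integer powers of $1/x$ produces an expansion
$$
e^{-\varphi_p(x)}=\sum_{k=0}^{N+2}\frac{\gamma_k}{x^{k/2}}+O\bigl(x^{-(N+3)/2}\bigr),
$$
in which each $\gamma_k$ is a finite polynomial in $b_1,\ldots,b_k$. Multiplying by $x$ and relabelling $\tau_n=\gamma_{n+2}$ yields the claimed expansion; since $N$ was arbitrary, the full asymptotic series follows.

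It remains to check the first few coefficients. One reads off $\tau_{-2}=1$ and $\tau_{-1}=-b_1=-L_p$ immediately. For the constant term,
$$
\tau_0=-b_2+\tfrac{1}{2}b_1^2=-\bigl(\tfrac{1}{3}+\tfrac{L_p^2}{6}\bigr)+\tfrac{L_p^2}{2}=\tfrac{L_p^2-1}{3},
$$
and for the next order,
$$
\tau_1=-b_3+b_1 b_2-\tfrac{1}{6}b_1^3=-\tfrac{L_p^3}{36}-\tfrac{5L_p}{36}+L_p\bigl(\tfrac{1}{3}+\tfrac{L_p^2}{6}\bigr)-\tfrac{L_p^3}{6}=\tfrac{L_p(7-L_p^2)}{36},
$$
matching the theorem. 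There is no genuine obstacle here: the only delicate point is the routine bookkeeping needed to certify that the inner truncation errors from the $\varphi_p$-expansion and the outer truncation error from the exponential series combine to give a single $O(x^{-(N+3)/2})$ remainder, which is handled by the elementary observation that $\varphi_p(x)=O(x^{-1/2})$ throughout.
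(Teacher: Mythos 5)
Your proposal is correct and follows exactly the paper's route: the paper's proof is the single observation that the theorem follows from Lemma \ref{lemma:asymp-varphi-ver-2} and the identity $m_p(x)=xe^{-\varphi_p(x)}$, and you have merely written out the (correct) bookkeeping for composing the exponential series with the expansion of $\varphi_p$. Your coefficient checks for $\tau_{-2},\tau_{-1},\tau_0,\tau_1$ all agree with the statement.
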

\begin{proof}
This follows from Lemma \ref{lemma:asymp-varphi-ver-2} and  
the relation $m_p(x)=xe^{-\varphi_p(x)}$.
%and expansion of the exponential function.
\end{proof}
This result was obtained in \cite[Theorem 1.3]{daalhuis-nemes}, where it was also obtained that the coefficient $\tau_n$ is a polynomial of degree $n+2$ with rational coefficients independent of $p$, evaluated at $L_p$. It is an even (resp.\ odd) polynomial when $n$ is even (resp.\ odd). These facts, except the degree being $n+2$, can also be obtained via the method outlined above but we skip the details.

Maple code for computing the polynomials in the expansions of $\varphi_p$ and $m_p$ is given below. We get (in accordance with \cite[Table 2]{daalhuis-nemes} since $L_{1-p}=-L_p$)
%the following expression of the first polynomials:
\begin{align*}
   \tau_2(t)&=-\nicefrac{t^4}{270}-\nicefrac{t^2}{810}+\nicefrac{8}{405}\\
   \tau_3(t)&=-\nicefrac{t^5}{4320}-\nicefrac{8t^3}{1215}+\nicefrac{433t}{38880}\\
   \tau_4(t)&=\nicefrac{t^6}{17010}-\nicefrac{t^4}{840}-\nicefrac{923t^2}{204120}+\nicefrac{184}{25515}\\
   \tau_5(t)&=\nicefrac{139t^7}{5443200}+\nicefrac{1451t^5}{48988800} -\nicefrac{289517t^3}{146966400}-\nicefrac{289717t}{146966400}.
\end{align*}
The procedure \verb!Qpol! computes the polynomial $Q_k(s)$ defined in \eqref{eq:defQ}:
\begin{verbatim}
Qpol := proc(k, s)
    local T, l, h; 
    h := v -> (exp(v) - v - 1)/v^2; 
    T := 1; 
    for l to k do 
       T := diff(T, v) - s^2*T*diff(h(v), v); 
    end do; 
    limit(T, v = 0); 
end proc;
\end{verbatim}
The procedure \verb!qkj! computes $q_k^{(j)}(L)$ from the proof of Lemma \ref{lemma:asymp-varphi-ver-2} at $L$:
\begin{verbatim}
qkj := proc(j, k) 
    local T; 
    if j = 0 then 
       T := s^k*Qpol(k, s)*exp(-1/2*s^2); 
    else 
       T := diff(s^k*Qpol(k, s)*exp(-1/2*s^2), s $ j); 
    end if; 
    eval(T, s = L); 
end proc;
\end{verbatim}
The procedure \verb!compute(j,n)! computes the coefficient $a_{j,n}$ of $x^{-n/2}$ in the asymptotic expansion of $\left(\sqrt{x}\varphi_p(x)-L_p\right)^{j+1}$. 
\begin{verbatim}
compute := proc(j, n) 
    local T, q; 
    global a; 
    T := 0; 
    if j = 0 then 
       T := a[0, n - j]; 
    else for q from 0 to n - j - 1 do 
            T := T + a[j - 1, q + j]*a[0, n - j - q]; 
         end do; 
    end if; 
    T; 
end proc;
\end{verbatim}
The numbers $A_k$ can be computed using the functions
\begin{verbatim}
c := k -> int(s^k*Qpol(k, s)*exp(-1/2*s^2), s = 0 .. infinity);
d := k -> int(s^k*Qpol(k, s)*exp(-1/2*s^2), s = 0 .. L);
A := k -> ((1 - p)*(-1)^k*c(k) - p*c(k) - (-1)^k*d(k))/k!;
\end{verbatim}
For a given $M$, the coefficients $a_0,a_1,\ldots,a_{M+1}$ are computed using \eqref{eq:]}:
\begin{verbatim}
M := 7;
for j from 0 to M do
    for l from 0 to M do a[j, l] := 0; end do;
end do;
a[0, 1] := (1 + L^2/2)/3;
for N from 1 to M do
    S := 0;
    for k from 0 to N do
        l := N + 1 - k;
        for j from max(0, 1 - k) to l - 1 do
            a[j,l] := compute(j,l);
            S := S + (-1)^(k+1)*qkj(j, k)*a[j,l]/(k!*(j+1)!);
        end do;
    end do;
    S := S + A(N + 1);
    p := 1/2 - erf(L/sqrt(2))/2;
    a[0, N + 1] := simplify(S*exp(L^2/2));
end do

for l from 0 to M + 1 do
    print(eval(a[0, l]));
end do
\end{verbatim}
Finally the expansion of $m_p$ is found using $m_p(x)=xe^{-\phi_p(x)}$. We found it convenient to change variable to $y=1/\sqrt{x}$ and use \verb!taylor!:
\begin{verbatim}
phip_of_y := L*y + sum(a[0, l]*y^(l + 1), l = 0 .. M);
temp := convert(taylor(exp(-phip_of_y), y = 0, M + 1), polynom);
collect(expand(x*eval(temp, y = 1/x^(1/2))), x);
\end{verbatim}

\noindent {\bf Remark}: The author reports there are no competing interests to declare.

\noindent 
Henrik Laurberg Pedersen\\
Department of Mathematical Sciences\\
University of Copenhagen\\
Universitetsparken 5, DK-2100\\
Denmark
\end{document}